\newtheorem{theorem}{Theorem}
\newtheorem{observation}[theorem]{Observation}
\newtheorem{proposition}[theorem]{Proposition}
\newtheorem{lemma}[theorem]{Lemma}
\newtheorem{corollary}[theorem]{Corollary}
\tikzstyle{vertex}=[circle, draw, inner sep=0pt, minimum size=6pt]
\newcommand{\QEDmark}{\mbox{\textsc{qed}}}
\newcommand{\proofStarter}[1]{\textsc{#1} }
\begin{document}

\title{ Restrained double Roman domination of a graph}
\author{{\small  Doost Ali Mojdeh$^{a}$\thanks{Corresponding author} ,  Iman  Masoumi$^{b}$ Lutz Volkmann$^{c}$}\\ \\ {\small $^{a}$Department of
Mathematics, Faculty of Mathematical Sciences}\\{\small University of Mazandaran, Babolsar, Iran}\\{\small email: damojdeh@umz.ac.ir}\\ \\
{\small $^{b}$Department of Mathematics, University of Tafresh}\\{\small Tafresh, Iran}\\ {\small email: i\_masoumi@yahoo.com}\\ \\
{\small $^{c}$Lehrstuhl II f\"{u}r Mathematik, RWTH Aachen University}\\ {\small 52056 Aachen, Germany}\\ {\small email: volkm@math2.rwth-aachen.de}  }
\date{}
\maketitle

\begin{abstract}
For a graph $G=(V,E)$, a restrained double Roman dominating function is a function $f:V\rightarrow\{0,1,2,3\}$ having the property that if $f(v)=0$,
then the vertex $v$ must have at least two neighbors
assigned $2$ under $f$ or one neighbor $w$ with $f(w)=3$, and if $f(v)=1$, then the vertex $v$ must have at least one neighbor $w$ with $f(w)\geq2$,
and at the same time, the subgraph $G[V_0]$ which
includes vertices with zero labels has no isolated vertex. The weight of a restrained double Roman dominating function $f$ is the sum
$f(V)=\sum_{v\in V}f(v)$, and the minimum weight of a restrained
double Roman dominating function on $G$ is the restrained double Roman domination number of $G$. We initiate the study of restrained double Roman
domination with proving that the problem of computing
this parameter is $NP$-hard. Then we present an upper bound on the restrained double Roman domination number of a connected graph $G$ in terms of the
order of $G$ and characterize the graphs attaining this bound.
We study the restrained double Roman domination versus the restrained Roman domination. Finally,   we characterized all trees $T$ attaining the exhibited
bound.
\end{abstract}

\textbf{2010 Mathematical Subject Classification:} 05C69

\textbf{Keywords}: Domination, restrained Roman domination, restrained double Roman domination.

\section{Introduction}
Throughout this paper, we consider $G$ as a finite simple graph with vertex set $V=V(G)$ and edge set $E=E(G)$. We use \cite{west} as a
reference for terminology and notation which are not explicitly defined
here. The open neighborhood of a vertex $v$ is denoted by $N(v)$, and its closed neighborhood is
$N[v]=N(v)\cup \{v\}$. The minimum and maximum degrees of $G$ are denoted by $\delta(G)$ and $\Delta(G)$,
respectively. Given subsets $A,B \subseteq V(G)$, by $[A,B]$ we mean the set of all edges with one end
point in $A$ and the other in $B$. For a given subset $S\subseteq V(G)$, by $G[S]$ we represent the subgraph
induced by $S$ in $G$. A tree $T$ is a double star if it contains exactly two vertices that are not leaves.
A double star with $p$ and $q$ leaves attached to each support vertex, respectively, is denoted by
$S_{p,q}$. A wounded spider is a tree obtained from subdividing at most $n-1$ edges of a star $K_{1,n}$.
A wounded spider obtained by subdividing $t \le n-1$  edges of $K_{1,n}$, is denoted by $ws(1,n, t)$.\\

A set $S\subseteq V(G)$ is called a dominating set if every vertex not in $S$ has a neighbor in $S$. The domination number
$\gamma(G)$ of $G$ is the minimum cardinality among all dominating sets of $G$. A
restrained dominating set ($RD$ set) in a graph $G$ is a dominating set $S$ in $G$ for which every vertex
in $V(G)-S$ is adjacent to another vertex in $V(G)-S$. The restrained domination number ($RD$
number) of $G$, denoted by
$\gamma_r(G)$, is the smallest cardinality of an $RD$ set of $G$. This concept was
formally introduced in \cite{domke} (albeit, it was indirectly introduced in \cite{hattingh, haynes}).

The variants of restrained domination have been already worked. For instance,  a total restrained domination of a graph $G$ is an $RD$ set of $G$
for which  the subgraph induced by the dominating set of $G$
has no isolated vertex,  which can be referred to the \cite{chen}. Secure restrained dominating set $(SRDS)$ which is a set
$S \subseteq V(G)$ for which $S$ is restrained dominating and for all $u \in V\setminus S$ there exists $v \in S\cap N(u)$ such that
$(S\setminus \{v\})\cup \{u\}$ is restrained dominating set \cite{roushini}.

The restrained Roman dominating function is a Roman dominating function $f: V(G) \to \{0,1,2\}$ such that the subgraph induced by the set
$\{v\in V(G): f(v)=0\}$ has no isolated vertex, \cite{roushini1}.
The restrained Italian dominating function ($RIDF$) is an Italian dominating function $f: V(G) \to \{0,1,2\}$ such that the subgraph induced by the
set $\{v\in V(G): f(v)=0\}$ has no isolated vertex, \cite{samadi}.

These results motivates us to consider a double Roman dominating function $f$ for which the subgraph induced by $V_0^f$ has no isolated vertex,
 which is the concept that we stand on it as new  parameter namely restrained double Roman domination and will be investigated in this paper

 Beeler \emph{et al}. (2016) \cite{bhh} introduced the concept of double Roman domination of a graph.\\
 If $ f:V(G)\rightarrow \{0,1,2,3\}$ is a function, then let $(V_0,V_1,V_2,V_3)$ be the ordered partition of $V(G)$ induced by $f$, where
 $V_i=\{v\in V(G):f(v)=i\}$
for $i=0,1,2,3$. There is a 1-1 correspondence between the function $f$ and the ordered partition $(V_0,V_1,V_2,V_3)$. So we will write
$f=(V_0,V_1,V_2,V_3)$.
A double Roman dominating function   (DRD function  for short) of a graph $G$ is a function $ f:V(G)\rightarrow \{0,1,2,3\}$ for which the following
conditions are satisfied.
\begin{itemize}
  \item[(a)] If $f(v)=0$, then the vertex $v$ must have at least two neighbors in $V_2$ or one neighbor in $V_3$.
  \item[(b)] If $f(v)=1$ , then the vertex $v$ must have at least one neighbor in $V_2\cup V_3$.
\end{itemize}
This parameter was also studied in \cite{al}, \cite{jr}, \cite{mojdeh} and \cite{zljs}.

 Accordingly,  a restrained double Roman dominating function ({$RDRD$} function for short) is a double Roman dominating function
 $f:V\rightarrow\{0,1,2,3\}$ having the property that:
  the subgraph induced by $V_0$ (the vertices with zero labels under $f$) $G[V_0]$  has no isolated vertex. The restrained double Roman domination
  number ($RDRD$ number) $\gamma_{rdR}(G)$ is the minimum weight  of an $RDRD$ function $f$ of $G$. For the sake of convenience, an $RDRD$
  function $f$ of a graph $G$ with weight $\gamma_{rdR}(G)$ is called a $\gamma_{rdR}(G)$-function.

  This paper is organized as follows.  We prove that the restrained double Roman domination problem  is $NP$-hard even for general graphs. Then,
  we present an upper bound on the restrained double Roman domination number of a connected graph $G$ in terms of the order of $G$ and characterize
  the graphs attaining this bound.
We study the restrained double Roman domination versus the restrained Roman domination. Finally, we characterize trees $T$ by the given restrained
double Roman domination number of $T$.

\section{Complexity and computational issues}
We consider the problem of deciding whether a graph $G$ has an $RDRD$ function of weight at most
a given integer. That is stated in the following decision problem.\\
 We shall prove the $NP$-completeness by reducing the following
vertex cover decision problem, which is known to be $NP$-complete.
\vspace{3mm}

\framebox{
\parbox{1\linewidth}{
VERTEX COVER DECISION PROBLEM
INSTANCE: A graph $G = (V,E)$ and a positive integer $p \le |V (G)|$.
QUESTION: Does there exist a subset $C \subseteq V (G)$ of size at most $p$ such that
for each edge $xy \in  E(G)$ we have $x \in C$ or $y \in C$?}}
\vspace{3mm}

\begin{theorem}
 \emph{(Karp \cite{karp} )}\label{the-karp} Vertex cover decision problem is $NP$-complete for general
graphs.
\end{theorem}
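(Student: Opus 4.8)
The plan is to establish the two obligations that $NP$-completeness requires: membership in $NP$ and $NP$-hardness by reduction from a problem already known to be intractable. Membership is immediate: given a candidate subset $C\subseteq V(G)$, one verifies in time $O(|E(G)|)$ that $|C|\le p$ and that every edge $xy\in E(G)$ satisfies $x\in C$ or $y\in C$, so the proposed cover serves as a polynomially checkable certificate. The real content lies in the hardness direction, and I would obtain it by a polynomial-time reduction from $3$-satisfiability, whose $NP$-completeness I take as the starting point (Cook's theorem, together with the standard clause-splitting reduction of general $SAT$ to $3$-CNF).

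Given a $3$-CNF formula $\phi$ on variables $x_1,\dots,x_n$ with clauses $C_1,\dots,C_m$, I would assemble a graph $G_\phi$ from two kinds of gadgets. For each variable $x_i$ I introduce a \emph{truth-setting} edge joining two vertices labelled $x_i$ and $\overline{x_i}$; covering this single edge forces at least one endpoint into any vertex cover, and the chosen endpoint is read as the literal set true. For each clause $C_j=(\ell_{j,1}\vee\ell_{j,2}\vee\ell_{j,3})$ I introduce a \emph{satisfaction-testing} triangle on three vertices, one per literal occurrence; covering the three triangle edges forces at least two triangle vertices into any cover. Finally I add a \emph{consistency} edge from each literal-occurrence vertex in a clause triangle to the matching literal vertex in the corresponding variable gadget, and I set the budget to $p=n+2m$.

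The claim to verify is that $\phi$ is satisfiable if and only if $G_\phi$ has a vertex cover of size at most $n+2m$. The lower bound is structural: the $n$ variable edges are pairwise vertex-disjoint and each needs one vertex, while the $m$ triangles are pairwise vertex-disjoint and each needs two, so every cover has at least $n+2m$ vertices. A cover meeting the budget exactly must take precisely one endpoint of each variable edge—this defines a truth assignment—and precisely two vertices of each triangle, leaving one literal-occurrence vertex per triangle uncovered; the consistency edge at that uncovered vertex must then be covered from the variable side, which forces the associated literal to be true, so every clause sees a satisfied literal. Conversely, a satisfying assignment produces a cover of size exactly $n+2m$ by selecting the true-literal vertex in each variable gadget and, in each triangle, the two vertices whose literals are not used to satisfy that clause. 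Since $G_\phi$ is built in polynomial time, the reduction is complete.

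The step I expect to be most delicate is the bookkeeping in the exact-count argument: one must show that optimality forces the cover to meet the variable and clause gadgets independently, so that the induced literal selection is simultaneously consistent (no variable is declared both true and false) and sufficient (each clause is witnessed by a true literal through an uncovered triangle vertex). Everything else—$NP$ membership, the polynomial-time constructibility of $G_\phi$, and the reverse implication—is routine.
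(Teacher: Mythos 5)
Your proposal is a correct and complete proof. Note, however, that the paper itself offers no proof of this statement: it is quoted as a classical result and attributed to Karp's 1972 paper, so there is no internal argument to compare yours against. For what it is worth, your route differs from Karp's original one. Karp established the $NP$-completeness of Node Cover by reducing from Clique (itself reduced from satisfiability): $G$ has a clique of size $k$ if and only if its complement has a vertex cover of size $|V(G)|-k$. You instead use the direct gadget reduction from $3$-SAT --- variable edges forcing one endpoint, clause triangles forcing two vertices, consistency edges, and the budget $p=n+2m$ --- which is the standard textbook presentation (Garey--Johnson style). Your exact-count argument is sound: the vertex-disjointness of the $n$ variable edges and the $m$ triangles forces any cover within budget to be tight, which simultaneously yields a consistent assignment (exactly one endpoint per variable edge) and a witness for each clause (the uncovered triangle vertex whose consistency edge must be covered from the variable side). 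The converse construction is also correct. Both approaches are valid; yours has the advantage of being self-contained modulo Cook's theorem, while the paper's citation simply defers to the literature, which is the customary treatment for a result of this vintage.
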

\vspace{3mm}

\framebox{
\parbox{1\linewidth}{
 RISTRAINED DOUBLE ROMAN DOMINATION problem ($RDRD$ problem)\\
INSTANCE: A graph $G$ and an integer $p\leq |V(G)|$.\\
QUESTION: Is there an $RDRD$ function $f$ for $G$ of weight at most $p$?}}\\
\vspace{3mm}

\begin{theorem}\label{the-NP}
The restrained double Roman domination problem is $NP$-complete for general
graphs.
\end{theorem}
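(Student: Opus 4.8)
The plan is to prove $NP$-completeness by first noting membership in $NP$, then reducing from the Vertex Cover problem (Theorem~\ref{the-karp}). Membership in $NP$ is routine: given a function $f:V\to\{0,1,2,3\}$ as a certificate, one verifies in polynomial time that $f$ is an $RDRD$ function (checking the double Roman conditions (a) and (b) at each vertex, and that $G[V_0]$ has no isolated vertex) and that $f(V)\le p$. The substance of the argument is the reduction, so I would spend the bulk of the proof constructing, from an arbitrary instance $(G,p)$ of Vertex Cover, an instance $(G',p')$ of the $RDRD$ problem such that $G$ has a vertex cover of size at most $p$ if and only if $G'$ has an $RDRD$ function of weight at most $p'$.

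The key design issue is the presence of the restrained condition, which forbids isolated vertices in $G[V_0]$: a naive gadget that assigns $0$ to vertices outside the cover can fail because those zero-vertices may be isolated in the induced subgraph. To handle this, I would attach a local gadget to each vertex and/or each edge of $G$ that simultaneously (i) forces the double Roman covering behavior to mimic the vertex-cover constraint on each edge, and (ii) supplies each potential zero-labeled vertex with a zero-labeled neighbor so the restrained condition is automatically satisfiable without extra cost. Concretely, for each vertex $v\in V(G)$ I would introduce a small fixed number of pendant or path-like attachments (for example, a short path or a few pendant vertices forming a gadget whose optimal $RDRD$ weight is a known constant), so that paying weight $3$ at $v$ corresponds to placing $v$ in the cover, while leaving $v$ at weight $0$ (matched with a zero-neighbor inside its own gadget) corresponds to excluding it. The edge gadgets are arranged so that each edge $xy$ of $G$ is dominated in the double Roman sense only if at least one of $x,y$ receives weight $3$, which encodes the covering requirement $x\in C$ or $y\in C$.

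The main obstacle I expect is calibrating the gadgets so that the two conditions interact cleanly: the restrained (no-isolated-zero) requirement must never force additional weight beyond what the vertex-cover correspondence predicts, and the double Roman minimum-weight behavior on each gadget must be rigid enough that an optimal $RDRD$ function necessarily takes the intended canonical form. I would therefore analyze the gadget in isolation first, proving that its contribution to the weight of any $RDRD$ function is exactly some constant $c$ when its attachment vertex is ``covered'' and exactly some larger constant otherwise, and that in the optimal configuration every zero-labeled vertex has a zero-labeled partner inside the gadget. Setting $p'$ to the resulting affine function of $p$ and the number of vertices and edges of $G$, I would then prove both directions of the equivalence: from a cover $C$ of size $\le p$, construct an explicit $RDRD$ function of weight $\le p'$ by assigning $3$ to cover vertices and the canonical gadget values elsewhere; conversely, from an $RDRD$ function of weight $\le p'$, first argue via the gadget rigidity that it may be taken in canonical form, and then read off a vertex cover of size $\le p$ from the set of vertices of $G$ receiving positive (or specifically weight-$3$) labels. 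Since the reduction is clearly polynomial in the size of $G$, combining membership in $NP$ with this equivalence yields $NP$-completeness.
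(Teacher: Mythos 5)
Your overall strategy --- membership in $NP$ plus a polynomial reduction from Vertex Cover --- is exactly the paper's, but your proposal stops short of the one thing the proof actually consists of: a concrete gadget together with a forcing argument. Your second and third paragraphs are a list of desiderata (``calibrating the gadgets so that the two conditions interact cleanly'', ``proving that its contribution \ldots is exactly some constant $c$'') rather than a construction, and for this problem the calibration is genuinely nontrivial. In particular, the direction you suggest --- a small fixed number of pendant or short-path attachments per vertex --- runs into an immediate complication specific to the restrained condition: in any $RDRD$ function a leaf can never be labeled $0$, since it would need its unique neighbor labeled $3$ for double Roman domination and simultaneously labeled $0$ to avoid being isolated in $G[V_0]$. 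Hence every pendant vertex contributes at least $1$ to the weight, the ``covered costs $c$, uncovered costs $c'>c$'' accounting becomes delicate, and you have not exhibited any gadget for which it actually closes.

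The paper sidesteps this delicacy by using oversized rather than small gadgets: it sets $m=3|V(G)|+4$ and attaches a cycle $C_m$ whose vertices are all joined to a new vertex $y$ (itself joined to all of $V(G)$), plus, for each edge $e_j=uv$ of $G$, a cycle of $m$ vertices each adjacent to both $u$ and $v$. The target weight is $3k+3$, and the point of choosing $m$ larger than the entire budget is that if $y$ (respectively, an appropriate endpoint of $e_j$) is not labeled $3$, the corresponding cycle alone already costs more than $3k+3$, via $\gamma_{rdR}(C_m)\ge\gamma_{dR}(C_m)\ge m>3|V(G)|+3\ge 3k+3$. The forcing argument is thus a one-line counting estimate instead of the rigidity analysis of optimal labelings that your plan would require. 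To complete your proof you should either adopt such heavy gadgets, or actually construct and verify the small gadget you allude to; as written, the reduction has no content yet.
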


\begin{proof}
We transform the vertex cover decision problem for general graphs to the
restrained double Roman domination decision problem for general graphs. For a given
graph $G = (V(G), E(G))$, let $ m= 3|V (G)| + 4$ and construct a graph $H = (V(H),E(H))$ as
follows. Let $V(H) = \{x_i : 1 \le i \le m\} \cup \{y\} \cup  V (G) \cup \{u_{j_i} : 1 \le i \le m\ \mbox{for\ each}\ e_j \in E(G)\}$, and let
$$E(H)=\{x_ix_{i+1}: (\mbox{mod}\ m)\ 1\le i\le m\}$$ $$\ \ \ \cup \{x_iy: 1\le i\le m\}  \cup \{vy: v\in V(G)\}$$ $$\ \ \
\cup \{vu_{j_i}: v\ \mbox{is\ the\ vertex\ of\  edge}\ e_j\in E(G)\ \mbox{and}\  1\le i \le m \}$$ $$\ \ \ \cup \{u_{j_i}u_{j_{(i+1)}}\
(\mbox{mod}\ m) : 1 \le i \le m\}.$$

Figure 1 shows the graph $H$ obtained from $G = P_4=a_1a_2a_3a_4$ by the above procedure. Note that, since $m= 3|V (G)| + 4=16$ for this example and $G$

\begin{figure}[h]
\tikzstyle{every node}=[circle, draw, fill=black!, inner sep=0pt,minimum width=.16cm]
\begin{center}
\begin{tikzpicture}[thick,scale=.6]
  \draw(0,0) { % <-- START CO-ORDINATES
  %$G$
 +(-1,1) node{}
    +(-1,-1) node{}
 +(-1,-3) node{}
    +(-1,-5) node{}
    +(-1,1) -- +(-1,-5)

+(-.3,1.) node[rectangle, draw=white!0, fill=white!100]{ $\small{a_1}$}
+(-.3,-1) node[rectangle, draw=white!0, fill=white!100]{ $\small{a_2}$}
+(-.3,-3) node[rectangle, draw=white!0, fill=white!100]{ $\small{a_3}$}
+(-.3,-5) node[rectangle, draw=white!0, fill=white!100]{ $\small{a_4}$}
+(-1.1,-5.7) node[rectangle, draw=white!0, fill=white!100]{ $\small{G}$}
%%%%%%%%%%%%%%%%%%%%%%%%%%%%%%%%%%%%%%%%%%%%%%%%%%%%%%%%%%%%%%%%%
%$H%

+(4,2) node{}
    +(4,0) node{}

+(4,-2.25) node[rectangle, draw=white!0, fill=white!100]{ ${\textbf{.}}$}
+(4,-2.5) node[rectangle, draw=white!0, fill=white!100]{ ${\textbf{.}}$}
+(4,-2.75) node[rectangle, draw=white!0, fill=white!100]{ ${\textbf{.}}$}
 +(4,-5) node{}
    +(4,-7) node{}
    +(4,2) -- +(4,-1.7)

 +(4,-3.3) -- +(4,-7)

 +(8,-2.5) node{}
+(8.,-1.7) node[rectangle, draw=white!0, fill=white!100]{ $\small{y}$}

+(4.8,1.8) node[rectangle, draw=white!0, fill=white!100]{ $\small{x_1}$}
+(4.8,.0) node[rectangle, draw=white!0, fill=white!100]{ $\small{x_2}$}
+(4.8,-5.2) node[rectangle, draw=white!0, fill=white!100]{ $\small{x_{15}}$}
+(4.8,-7) node[rectangle, draw=white!0, fill=white!100]{ $\small{x_{16}}$}

 +(8,-2.5) -- +(4,2)
 +(8,-2.5) -- +(4,0)
 +(8,-2.5) -- +(4,-5)
 +(8,-2.5) -- +(4,-7)
 +(8,-2.5) -- +(7,-2.75)
 +(8,-2.5) -- +(7,-2.25)

    +(12,3.5) node{}
 +(12.5,1.5) node{}
    +(12,-.5) node{}
  +(12.5,-2.5) node{}
    +(12,-4.5) node{}
    +(12.5,-6.5) node{}
 +(12,-8.5) node{}

 +(12.5,1.5) -- +(12,3.5)
  +(12.5,1.5) -- +(12,-.5)

   +(12.5,-2.5) -- +(12,-.5)
    +(12.5,-2.5) -- +(12,-4.5)
     +(12.5,-6.5) -- +(12,-4.5)
     +(12.5,-6.5) -- +(12,-8.5)

 +(8,-2.5) -- +(12,3.5)
    %+(8,-2.5) -- +(12.5,1.5)
    +(8,-2.5) -- +(12,-.5)   %+(8,-2.5) -- +(12.5,-2.5)
    +(8,-2.5) -- +(12,-4.5)   %+(8,-2.5) -- +(12.5,-6.5)
    +(8,-2.5) -- +(12,-8.5)

+(11.5,3.5) node[rectangle, draw=white!0, fill=white!100]{ $\small{a_1}$}
+(11.5,-.2) node[rectangle, draw=white!0, fill=white!100]{ $\small{a_2}$}
+(11.5,-4.8) node[rectangle, draw=white!0, fill=white!100]{ $\small{a_3}$}
+(11.5,-8.5) node[rectangle, draw=white!0, fill=white!100]{ $\small{a_4}$}

+(11.7,1.5) node[rectangle, draw=white!0, fill=white!100]{ $\small{u_{1_1}}$}
+(11.7,-2.5) node[rectangle, draw=white!0, fill=white!100]{ $\small{u_{2_1}}$}
+(11.7,-6.5) node[rectangle, draw=white!0, fill=white!100]{ $\small{u_{3_1}}$}

+(14.7,2) node[rectangle, draw=white!0, fill=white!100]{ $\small{u_{1_2}}$}
+(14.7,-2) node[rectangle, draw=white!0, fill=white!100]{ $\small{u_{2_2}}$}
+(14.7,-6) node[rectangle, draw=white!0, fill=white!100]{ $\small{u_{3_2}}$}

+(19.4,2.5) node[rectangle, draw=white!0, fill=white!100]{ $\small{u_{1_{15}}}$}
+(19.4,-1.5) node[rectangle, draw=white!0, fill=white!100]{ $\small{u_{2_{15}}}$}
+(19.4,-5.5) node[rectangle, draw=white!0, fill=white!100]{ $\small{u_{3_{15}}}$}

+(22.3,1.5) node[rectangle, draw=white!0, fill=white!100]{ $\small{u_{1_{16}}}$}
+(22.3,-2.5) node[rectangle, draw=white!0, fill=white!100]{ $\small{u_{2_{16}}}$}
+(22.3,-6.5) node[rectangle, draw=white!0, fill=white!100]{ $\small{u_{3_{16}}}$}

+(12,3.5) -- +(14.5,1.5)
 +(12,3.5) -- +(19.5,1.5)
 +(12,3.5) -- +(21.5,1.5)

 +(12,-.5) -- +(14.4,1.5)
 +(12,-.5) -- +(19.5,1.5)
 %+(12,-.5) -- +(21.5,1.5)

 +(12,-.5) -- +(14.5,-2.5)
 +(12,-.5) -- +(19.5,-2.5)
 +(12,-.5) -- +(21.5,-2.5)

+(12,-4.5) -- +(14.5,-2.5)
 +(12,-4.5) -- +(19.5,-2.5)
 %+(12,-4.5) -- +(21.5,-2.5)

 +(12,-4.5) -- +(14.5,-6.5)
 +(12,-4.5) -- +(19.5,-6.5)
 +(12,-4.5) -- +(21.5,-6.5)

 +(12,-8.5) -- +(14.5,-6.5)
 +(12,-8.5) -- +(19.5,-6.5)
 %+(12,-8.5) -- +(21.5,-6.5)

+(14.5,1.5) node{}    +(19.5,1.5) node{}
+(14.5,-2.5) node{}   +(19.5,-2.5) node{}
+(14.5,-6.5) node{}   +(19.5,-6.5) node{}
+(21.5,1.5) node{}
+(21.5,-2.5) node{}
+(21.5,-6.5) node{}

+(16.75,1.5) node[rectangle, draw=white!0, fill=white!100]{ ${\textbf{.}}$}
+(17,1.5) node[rectangle, draw=white!0, fill=white!100]{ ${\textbf{.}}$}
+(17.25,1.5) node[rectangle, draw=white!0, fill=white!100]{ ${\textbf{.}}$}

+(16.75,-2.5) node[rectangle, draw=white!0, fill=white!100]{ ${\textbf{.}}$}
+(17,-2.5) node[rectangle, draw=white!0, fill=white!100]{ ${\textbf{.}}$}
+(17.25,-2.5) node[rectangle, draw=white!0, fill=white!100]{ ${\textbf{.}}$}

+(16.75,-6.5) node[rectangle, draw=white!0, fill=white!100]{ ${\textbf{.}}$}
+(17,-6.5) node[rectangle, draw=white!0, fill=white!100]{ ${\textbf{.}}$}
+(17.25,-6.5) node[rectangle, draw=white!0, fill=white!100]{ ${\textbf{.}}$}

+(12.5,1.5) -- +(16.2,1.5)
+(17.8,1.5) -- +(21.5,1.5)

+(12.5,-2.5) -- +(16.2,-2.5)
+(17.8,-2.5) -- +(21.5,-2.5)

+(12.5,-6.5) -- +(16.2,-6.5)
+(17.8,-6.5) -- +(21.5,-6.5)

+(12,3.5) -- +(21.5,1.5)
%+(12,-.5) -- +(21.5,1.5)
+(12,-.5) -- +(21.5,-2.5)
%+(12,-4.5) -- +(21.5,-2.5)
+(12,-4.5) -- +(21.5,-6.5)
%+(12,-8.5) -- +(21.5,-6.5)

(12,-.5) .. controls(16.75,-.5) .. (21.5,1.5)
(12,-4.5) .. controls(16.75,-4.5) .. (21.5,-2.5)
(12,-8.5) .. controls(16.75,-8.5) .. (21.5,-6.5)

(12.5,1.5) .. controls(17, 0) .. (21.5, 1.5)
(12.5,-2.5) .. controls(17, -4) .. (21.5, -2.5)
(12.5,-6.5) .. controls(17, -8) .. (21.5, -6.5)

 (4, 2) .. controls(2, -2.5) ..  (4, -7)
};
\end{tikzpicture}
\end{center}
\caption{The  graph $G=P_4$ and $H$.}\label{fig:g1-g2-g3}
\end{figure}
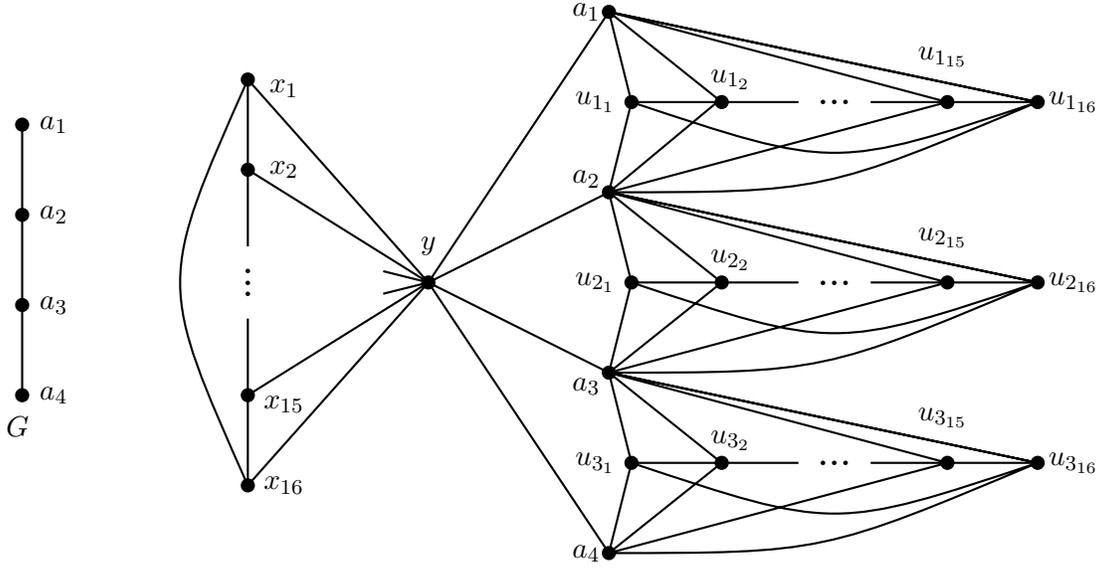

has three edges $e_1, e_2, e_3$,
$$H[\{x_i: 1\le i\le 16\}] \cong H[\{u_{1_i}: 1\le i\le 16\}] \cong H[\{u_{2_i}: 1\le i\le 16\}]\cong H[\{u_{3_i}: 1\le i\le 16\}]\cong C_{16}$$
$y$ is adjacent to $x_i$ for $1\le i \le 16$ and $a_l$ for  $1\le l\le 4$;  $u_{j_i}$ is adjacent to both $a_j$ and $a_{j+1}$ for $1\le j\le 3$ and
$1\le i\le 16$.

We claim that $G$ has a vertex cover of size at most $k$ if and only if $H$ has an RDRDF with weight at most $3k+3$. Hence the $NP$-completeness of the
restrained double Roman domination problem in general
graphs will be equivalent  to the $NP$-completeness of vertex cover problem. First, if $G$ has a vertex cover $C$ of size at most $k$, then the
function $f$ defined on $V(G)$ by $f(v) = 3$ for
$v \in C \cup \{y\}$ and $f(v) = 0$ otherwise,
is an RDRDF with weight at most $3k + 3$. On the other hand, suppose that $g$ is an RDRDF on $H$ with weight at most $3k + 3$. If $g(y)\ne 3$, then
there exist two cases.

Case 1. Let $g(y)\in \{0,1\}$. Then
$$\sum_{i=1}^{m}g(x_i)\ge \gamma_{rdR}(C_m) \ge \gamma_{dR}(C_m)\ge m >3|V(G)|+3 \ge 3k+3$$ that is a contradiction.

Case 2. Let $g(y)=2$ and $C_m=\{x_ix_{i+1}: (\mod\ m)\ 1\le i\le m\}$. Then $g(C_m)\ge 2m/3$ and $g(H)\ge 2m/3 +2k+2 = 2(3|V (G)| + 4)/3 +2k+2\ge
4k+14/3> 3k+3$ which is a contradiction.  Thus $g(y) = 3$. Similarly, we have $g(u) = 3$ or $g(v) = 3$
for any $e = uv \in  E(G)$. Therefore $C = \{v \in V : g(v) = 3\}$ is a vertex cover of $G$ and
$3|C| + 3 \le w(g) \le 3k + 3$. Consequently, $|C| \le k$.
\end{proof}

\section{$RDRD$ number of some graphs}
In this section we investigate the exact value of the restrained double Roman domination number of some graphs.
\begin{observation}\label{the-com-par} For complete graph $K_n$ and complete bipartite graph $K_{m,n}$,\\

\emph{(i)}   $\gamma_{rdR}(K_n)=3$ for $n\ge 2$.\\

\emph{(ii)} $\gamma_{rdR}(K_{n,m})=6$ for $m,n\ge 2$.\\

\emph{(iii)}  $\gamma_{rdR}(K_{1,m})=m+2.$

\emph{(iv)}  $\gamma_{rdR}(K_{n_1,n_2,\cdots, n_m})=\left\{
      \begin{array}{ll}
        3, &  \mbox{if}\ \emph{min}\{n_1 ,n_2,\cdots, n_m\}=1,\\
        6, & \hbox{otherwise.}
      \end{array}
    \right.$\\.
\end{observation}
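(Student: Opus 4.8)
The plan is to establish each value by a matching pair of bounds: an explicit RDRD function for the upper bound and a short impossibility argument for the lower bound, throughout exploiting that in a complete multipartite graph a zero vertex is isolated in $G[V_0]$ precisely when every other zero lies in its own part. For (i) and for the branch $\min\{n_i\}=1$ of (iv) the upper bound is witnessed by weighting a single dominating vertex with $3$ and all others with $0$: in $K_n$ any vertex, and in $K_{n_1,\dots,n_m}$ the vertex of a singleton part. Every zero then sees this $3$, and since the remaining zeros span at least two parts $G[V_0]$ has no isolated vertex (for $K_2$ one instead uses the assignment $2,1$, whose $V_0$ is empty). For the lower bound I would note that each of these graphs has $\gamma_{rdR}\ge 3$, since weight $0$, a lone $1$, or two $1$'s all strand some vertex without the heavy neighbour it requires, and a single $2$ cannot supply the two $2$'s or one $3$ that a zero vertex demands; hence these values equal $3$.

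For $K_{m,n}$ and $K_{1,m}$ I would route the lower bound through the structure of $V_0$. In $K_{m,n}$ a zero of part $A$ is non-isolated in $G[V_0]$ only if part $B$ also contains a zero, so the restrained condition forces either $V_0=\emptyset$ or both parts to meet $V_0$. In the latter case a zero of $A$ must be double-Roman dominated entirely from $B$, giving $f(B)\ge 3$, and symmetrically $f(A)\ge 3$, so $f(V)\ge 6$; when $V_0=\emptyset$ every vertex is positive and, using $m,n\ge 2$, a direct check shows that any weight at most $5$ leaves some $1$ without a heavier neighbour, again forcing $6$. The companion function places a $3$ on one vertex of each part. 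For the star $K_{1,m}$ the same isolation analysis shows that no leaf can be zero, since its only neighbour is the centre, and that the centre cannot be zero either, so $V_0=\emptyset$; the $m$ leaves contribute at least $m$, and since an all-ones assignment would leave every leaf without a heavy neighbour, the centre must carry weight $2$, yielding $m+2$, realised by $f(\text{centre})=2$ and $f(\ell)=1$ for each leaf $\ell$.

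For the remaining branch of (iv), with every $n_i\ge 2$, the upper bound $\gamma_{rdR}\le 6$ follows from placing a $3$ on one vertex in each of two distinct parts and $0$ elsewhere: every zero sees a $3$ lying in another part, and because each part has at least two vertices, $V_0$ meets at least two parts and induces no isolated vertex. The main obstacle is the matching lower bound. I would split once more on whether $V_0$ is empty; if it is, positivity of all $\sum n_i\ge 6$ vertices settles it immediately. If $V_0\ne\emptyset$ the restrained condition now only forces $V_0$ to meet two parts, and for each part $P$ containing a zero one obtains the part-wise inequality $f(V)-f(P)\ge 3$. Turning this collection of local inequalities into the global bound $f(V)\ge 6$ is the delicate point, since one must preclude configurations that concentrate the whole weight in a single part while still dominating and restraining every zero; I expect this interaction between the double-Roman requirement and the restrained requirement to be the crux of the argument and the step most in need of careful case analysis.
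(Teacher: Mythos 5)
The paper states this Observation without any proof, so there is no argument of the authors' to compare yours against; I can only assess your proposal on its own terms. Parts (i), (ii), (iii) and the branch of (iv) with $\min\{n_1,\dots,n_m\}=1$ are handled correctly and completely: the explicit functions you exhibit are valid, and your lower-bound arguments (the part-wise inequality $f(B)\ge 3$ for $K_{m,n}$, the forced emptiness of $V_0$ in a star, and the impossibility of total weight at most $2$) are sound.

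The difficulty you flag in the last branch of (iv) is, however, not a gap you failed to close but a genuine failure of the statement. The configuration you single out --- all the weight concentrated in a single part --- really does beat the claimed value $6$ once there are at least three parts. Take $G=K_{2,2,2}$ with parts $\{a_1,a_2\}$, $\{b_1,b_2\}$, $\{c_1,c_2\}$ and set $f(a_1)=f(a_2)=2$ and $f=0$ elsewhere. Every zero vertex lies outside the first part, hence is adjacent to both $a_1$ and $a_2$ and so has two neighbours in $V_2$; and $G[V_0]$ is the complete bipartite graph between $\{b_1,b_2\}$ and $\{c_1,c_2\}$, which has no isolated vertex. This is a restrained double Roman dominating function of weight $4$, so $\gamma_{rdR}(K_{2,2,2})\le 4<6$ (and weight $3$ is easily excluded by your own counting, so the value is exactly $4$). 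The same device gives weight $4$ whenever there are at least three parts and some part has size exactly $2$. The value $6$ is correct only in the bipartite case $m=2$ (where the zeros of a single part are mutually non-adjacent, which is precisely why your argument for (ii) works) or when every part has at least $3$ vertices. So the case analysis you anticipated cannot succeed as stated; the Observation itself needs to be corrected before a proof of its last claim can be written.
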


\begin{theorem}\label{the-path}
For a path $P_n$ $(n\geq 4)$, $\gamma_{rdR}(P_n)=n+2$.\\
\end{theorem}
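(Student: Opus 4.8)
The plan is to establish the two inequalities $\gamma_{rdR}(P_n)\le n+2$ and $\gamma_{rdR}(P_n)\ge n+2$ separately. Write $P_n=v_1v_2\cdots v_n$, and for an RDRD function $f$ let $(V_0,V_1,V_2,V_3)$ be its induced partition, put $n_i=|V_i|$, and recall $w(f)=n_1+2n_2+3n_3$. Note that $w(f)-n=n_2+2n_3-n_0$, so proving the lower bound amounts to showing $n_2+2n_3-n_0\ge 2$.

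For the upper bound I would exhibit an explicit RDRD function of weight $n+2$ in each residue class of $n$ modulo $3$. Writing $003$ for the block ``$0,0,3$'', the candidates are $3(003)^{k}$ when $n=3k+1$, $3(003)^{k}1$ when $n=3k+2$, and $1\,3(003)^{k-1}1$ when $n=3k$ (all with the relevant $k\ge 1$). In each case the two zeros of every block are mutually adjacent, so $G[V_0]$ has no isolated vertex, and each zero is flanked by a vertex labelled $3$; the stray $1$'s sit next to a $3$. Thus each is an RDRD function, and a direct count gives weight exactly $n+2$, whence $\gamma_{rdR}(P_n)\le n+2$.

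For the lower bound the essential step is a structural description of $V_0$ for an arbitrary RDRD function $f$. First I would observe $v_1,v_n\notin V_0$: an endpoint labelled $0$ has a single neighbour, which cannot simultaneously serve as its required zero-neighbour and as the label-$3$ (or one of two label-$2$) vertex a zero needs. Next, since $G[V_0]$ has no isolated vertex, the zeros split into maximal blocks of consecutive vertices each of size at least $2$; but a block of size $\ge 3$ would contain a zero with both neighbours in $V_0$, which cannot be double-Roman dominated. Hence every block has size exactly $2$, and for such a block $\{v_a,v_{a+1}\}$ each endpoint has exactly one non-zero neighbour, forcing $v_{a-1}=v_{a+2}=3$. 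Consequently $n_0=2z$, where $z$ is the number of zero-blocks, and every block is flanked on both sides by a label-$3$ vertex.

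The final step is the counting inequality $n_2+2n_3-n_0\ge 2$, and here the crux is to prove $n_3\ge z+1$ whenever $z\ge 1$. I would order the blocks left to right as $\{v_{a_i},v_{a_i+1}\}$ with $a_1<\cdots<a_z$ and assign to the $i$-th block its right guard $v_{a_i+2}\in V_3$; these $z$ positions are pairwise distinct, and the left guard $v_{a_1-1}\in V_3$ of the first block lies strictly to their left, yielding $z+1$ distinct vertices of $V_3$. Then $n_2+2n_3-n_0\ge 2n_3-2z\ge 2$. The remaining case $z=0$ (no zeros) is disposed of directly: if $w(f)\le n+1$ then $n_3=0$ and $n_2\le 1$, so at most one vertex carries a label $\ge 2$; since $n\ge 4$ this leaves a label-$1$ vertex with no neighbour of label $\ge 2$, contradicting the definition. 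Combining the two bounds gives $\gamma_{rdR}(P_n)=n+2$. I expect this last step—pinning down $n_3\ge z+1$ via the injective assignment of guards and separately ruling out the zero-free case—to be the main obstacle, the structural reductions being comparatively routine.
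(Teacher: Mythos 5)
Your proof is correct, and while your upper-bound constructions coincide (up to reflection) with the paper's explicit labellings in each residue class mod $3$, your lower bound takes a genuinely different route. The paper proceeds by induction on $n$ (with base cases $4\le n\le 6$), examining the label of the last vertex $v_n$ and deleting one, two or three end vertices accordingly; this forces it to assert several structural claims about the tail of a $\gamma_{rdR}$-function (e.g.\ that $f(v_n)=3$ forces $f(v_{n-1})=f(v_{n-2})=0$ and $f(v_{n-3})=3$) which really hold only after a minimality/normalization argument that the paper does not spell out. You instead argue directly and non-inductively for an \emph{arbitrary} RDRD function: the identity $w(f)-n=|V_2|+2|V_3|-|V_0|$, the observation that the zeros of any RDRD function on a path form interior blocks of size exactly two flanked on both sides by $3$'s, and the injective assignment of guards giving $|V_3|\ge z+1$ (with the zero-free case handled by a trivial count). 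This buys you a cleaner structural picture, removes the need for base-case verification and for the unstated minimality reductions, and transfers almost verbatim to the cycle case (the paper's Theorem on $C_n$ instead reduces cycles to paths by edge deletion). The paper's induction is shorter to write but more fragile; your counting argument is self-contained and, in my view, closes the small gaps the paper leaves open.
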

\begin{proof}
Assume that $n\ge 4$ and  $P_n=v_1v_2\cdots v_n$.  Define $h:V(P_n) \to \{0,1,2,3\}$ by  $h(v_{3i+2})=3$ for $0\le i \le n/3-1,\  h(v_{1})=h(v_n)=1$
and $h(v)=0$ otherwise, whenever
$n \equiv 0 \,({\rm mod}\, 3)$.\\
Define $h:V(P_n) \to \{0,1,2,3\}$ by  $h(v_{3i+1})=3$ for $0\le i \le (n-1)/3$  and $h(v)=0$ otherwise, whenever $n \equiv 1\, ({\rm mod}\, 3)$. \\
Define $h:V(P_n) \to \{0,1,2,3\}$ by  $h(v_{3i+2})=3$ for $0\le i \le (n-2)/3,\
h(v_{1})=1$  and $h(v)=0$ otherwise, whenever $n \equiv 2\, ({\rm mod}\, 3)$. Therefore $\gamma_{rdR}(P_n)\le n+2$ for $n\ge 4$.

Now we prove the inverse inequality.  It is straightforward to verify that $\gamma_{rdR}(P_n)=n+2$ for $4\le n\le 6$.  For $n\ge 7$ we proceed by
induction on $n$. Let $n\ge 7$ and
let the inverse inequality be true for every path of order less than $n$. Assume that  $f = (V_0, V_1, V_2, V_3)$ is a $\gamma_{rdR}$-function of $P_n$.
It is well known that $f(v_n)\ne 0$.
 If $f(v_n)=1$, then $f(v_{n-1}) \ge 2$.  Define $g: P_{n-1} \to \{0,1,2,3\}$, $g(v_i)=f(v_i)$ for $1\le i\le n-1$. Clearly, $g$ is an RDRD-function
 of $P_{n-1}$. It follows from the induction hypothesis that
$$\gamma_{rdR}(P_n)=w(f)=w(g)+1\ge \gamma_{rdR}(P_{n-1})+1\ge (n-1)+2 +1\ge n+2.$$
If $f(v_{n}) =2$, then  $f(v_{n-1})= 1$ and $f(v_{n-2})\ge 1$.  Define $g: P_{n-2} \to \{0,1,2,3\}$, $g(v_i)=f(v_i)$ for $1\le i\le n-2$. Clearly,
$g$ is a $RDRD$-function of $P_{n-2}$. As above we obtain,
$$\gamma_{rdR}(P_n)=w(f)=w(g)+3\ge \gamma_{rdR}(P_{n-2})+3\ge (n-2)+2 +3=n+3.$$
 If $f(v_{n}) =3$, then $f(v_{n-1})= 0$, $f(v_{n-2})= 0$ and $f(v_{n-3})= 3$. Define $g: P_{n-3} \to \{0,1,2,3\}$, $g(v_i)=f(v_i)$ for $1\le i\le n-3$.
 Clearly, $g$ is a $RDRD$-function
of $P_{n-3}$.  It also follows from the induction hypothesis that $$\gamma_{rdR}(P_n)=w(f)=w(g)+3\ge \gamma_{rdR}(P_{n-3})+3\ge (n-3)+2 +3=n+2.$$
 Thus the proof is complete.\\
\end{proof}

 \begin{theorem}\label{the-cycle}
For a cycle $C_n$, $(n\ge 3)$,
 $\gamma_{rdR}(C_n)=\left\{
      \begin{array}{ll}
        n, &  \mbox{if}\ n \equiv 0\ (\mbox{mod}\ 3), \\
        n+2, & \hbox{otherwise.}
      \end{array}
    \right.$\\
\end{theorem}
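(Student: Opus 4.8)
The plan is to prove matching upper and lower bounds. For the upper bound I would exhibit explicit RDRD functions in the three residue classes modulo $3$, in the spirit of the path construction of Theorem~\ref{the-path}. Writing $C_n=v_1v_2\cdots v_nv_1$, when $n\equiv 0\pmod 3$ I set $f(v_{3i+1})=3$ for $0\le i\le n/3-1$ and $f=0$ elsewhere; the zeros then occur in consecutive pairs, each flanked by a $3$, so $G[V_0]$ is a disjoint union of edges and $f$ is an RDRD function of weight $n$. When $n\equiv 1\pmod 3$ I extend the pattern by placing one further $3$ on the leftover vertex (so $f(v_{3i+1})=3$ for $0\le i\le (n-1)/3$), giving weight $n+2$; when $n\equiv 2\pmod 3$ I keep the $3,0,0$ blocks and assign the single leftover vertex the value $1$, which is legal because it is adjacent to a $3$, again giving weight $n+2$. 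In each case a direct check of conditions (a), (b) and the restrained condition completes the upper bound.

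The core of the argument is the lower bound, for which I would first establish a rigid structural description of any RDRD function $f=(V_0,V_1,V_2,V_3)$ on a cycle. Since every vertex of $C_n$ has exactly two neighbors, if $f(v)=0$ then the restrained condition forces at least one neighbor into $V_0$, which rules out the ``two neighbors in $V_2$'' option of condition (a); hence the remaining neighbor must lie in $V_3$. Thus each $0$-vertex has \emph{precisely} one neighbor in $V_0$ and one in $V_3$. Two consequences follow at once: $G[V_0]$ is $1$-regular, so $n_0:=|V_0|$ is even; and counting the incidences between $V_0$ and $V_3$ (each $0$-vertex contributes exactly one, each $3$-vertex at most two) yields $n_0\le 2n_3$.

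Writing $n_i=|V_i|$, these facts give $w(f)-n=n_2+2n_3-n_0\ge 0$, so $\gamma_{rdR}(C_n)\ge n$, which together with the construction already settles the case $n\equiv 0\pmod 3$. To get the sharper bound when $n\not\equiv 0\pmod 3$, I would analyze the excess $E:=n_2+2n_3-n_0$. If $E=0$ then $n_2=0$ and $n_0=2n_3$; equality in $n_0\le 2n_3$ forces every $3$-vertex to have both neighbors in $V_0$, so $G[V_0\cup V_3]$ is $2$-regular and hence is all of $C_n$, pinning the value sequence to the periodic block $3,0,0$ and forcing $n\equiv 0\pmod 3$. If $E=1$, then since $n_0$ is even we would need $n_2$ odd, so $n_2\ge 1$ and $2n_3-n_0=1-n_2\le 0$; combined with $n_0\le 2n_3$ this gives $n_0=2n_3$ and $n_2=1$, contradicting the fact just proved that $n_0=2n_3$ forces $n_2=0$. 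Hence $E\ne 1$, so for $n\not\equiv 0\pmod 3$ we get $E\ge 2$ and $w(f)\ge n+2$.

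The main obstacle is precisely ruling out $w(f)=n+1$: the crude inequality $n_0\le 2n_3$ only delivers $\gamma_{rdR}(C_n)\ge n$, and the extra $+2$ rests entirely on the parity fact that $n_0$ is even, which in turn depends on the structural lemma that each $0$-vertex has exactly one $0$-neighbor. I would therefore state that lemma carefully and confirm that the degree-$2$ hypothesis (valid for every $C_n$ with $n\ge 3$) is used correctly; the small cases $n=3,4,5$ are handled by the same argument, though they are worth a sanity check against Observation~\ref{the-com-par} and the explicit constructions above.
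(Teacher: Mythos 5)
Your lower-bound argument takes a genuinely different route from the paper's. The paper settles $n\equiv 0\pmod 3$ by citing $\gamma_{rdR}(C_n)\ge\gamma_{dR}(C_n)=n$, and for $n\not\equiv 0\pmod 3$ it finds two adjacent vertices of positive weight, deletes an edge to obtain $P_n$, and invokes $\gamma_{rdR}(P_n)=n+2$ from Theorem~\ref{the-path}. You instead exploit the local structure forced at every $0$-vertex of a $2$-regular graph (exactly one neighbour in $V_0$ and one in $V_3$) and convert it into the counting facts $n_0$ even, $n_0\le 2n_3$, and $w(f)-n=n_2+2n_3-n_0\ge 0$. This is self-contained, does not rely on the path theorem for the lower bound, and makes transparent why the value drops to $n$ exactly when $3\mid n$; it is arguably tighter than the paper's edge-deletion argument, which leaves several verifications implicit. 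Your upper-bound constructions match the paper's, though for $n\equiv 2\pmod 3$ your description leaves \emph{two} vertices, not one, outside the $3,0,0$ blocks; the intended pattern $3,0,0,\dots,3,0,0,3,1$ (weight $n+2$) should be written out explicitly.

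There is, however, one genuine gap, in the step ruling out excess $E=1$. You derive $n_0=2n_3$ and $n_2=1$ and then invoke ``the fact just proved that $n_0=2n_3$ forces $n_2=0$.'' That fact was established only under the tacit assumption $V_0\cup V_3\ne\emptyset$: equality in $n_0\le 2n_3$ forces every $3$-vertex to have both neighbours in $V_0$, and hence $V_0\cup V_3=V(C_n)$, only when such vertices exist. The degenerate solution $n_0=n_3=0$, $n_2=1$, $n_1=n-1$ satisfies all of your inequalities with $E=1$, and for $n=3$ it really is an RDRD function (labels $2,1,1$ on $C_3$, weight $n+1$), so the blanket claim ``$E\ne 1$'' is false as stated. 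The repair is easy but must be made: for $n\ge 4$ a single $2$-vertex with no $3$'s cannot satisfy condition (b) for all $n-1\ge 3$ vertices of $V_1$, since only its two neighbours may carry label $1$; and $n=3$ lies in the residue class where only $E\ge 0$ is needed. The same nonemptiness caveat should be stated in the $E=0$ case as well (there $n_3=0$ would force $V=V_1$, which violates condition (b) outright). With these two sentences added, your argument is complete.
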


 \begin{proof}
 Assume that $n\ge 3$ and  $C_n=v_1v_2\cdots v_nv_1$.  Define $h:V(C_n) \to \{0,1,2,3\}$ by  $h(v_{3i})=3$ for $1\le i \le n/3$  and
 $h(v)=0$ otherwise, whenever  $n \equiv 0\, ({\rm mod}\, 3)$.\\
Define $h:V(C_n) \to \{0,1,2,3\}$ by  $h(v_{3i+1})=3$ for $0\le i \le (n-1)/3$  and $h(v)=0$ otherwise, whenever $n \equiv 1\, ({\rm mod}\, 3)$. \\
Define $h:V(C_n) \to \{0,1,2,3\}$ by  $h(v_{3i+2})=3$ for $0\le i \le (n-2)/3,\ h(v_{1})=1$  and $h(v)=0$ otherwise, whenever
$n \equiv 2\, ({\rm mod}\, 3)$. Therefore
$$\gamma_{rdR}(C_n)\le \left\{
      \begin{array}{ll}
        n, &  \mbox{if}\ n \equiv 0\ (\mbox{mod}\ 3), \\
        n+2, & \hbox{otherwise.}
      \end{array}
    \right.$$

 Now we prove the inverse inequality. For $n \equiv 0 \,({\rm mod}\, 3)$, since $\gamma_{rdR}(C_n)\ge \gamma_{dR}(C_n)=n$, (see \cite{al, bhh}),
 clearly the result holds.
 Let $n \not \equiv 0\  (\mbox{mod}\ 3)$ and let $f = (V_0, V_1, V_2, V_3)$ be a $\gamma_{rdR}$-function of $C_n$. Since the neighbor
 of vertex of weight $0$ is a vertex of weight $3$ and a vertex of weight $0$,
if $n \not \equiv 0\  (\mbox{mod}\ 3)$, there are two adjacent vertices $v_i, v_{i+1}$ in $C_n$ such that their weights are positive.
Now, if $f(v_i)\ge 2$ and $f(v_{i+1})\ge 2$, then by removing the edge $v_iv_{i+1}$, the resulted graph is $P_n$. Define
$g: P_{n} \to \{0,1,2,3\}$, $g(v_i)=f(v_i)$ for $1\le i\le n$. Clearly,
$g$ is an RDRD-function of $P_{n}$ with $w(g)=w(f)$. Since $w(g)\ge n+2$ then $w(f)\ge n+2$.\\
Let $f(v_i)\ge 2$ and $f(v_{i+1})= 1$. Then $f(v_{i+2})\ge  1$. Now  remove the edge $v_{i+1}v_{i+2}$ and obtain a $P_n$.  Define
$g: P_{n} \to \{0,1,2,3\}$, $g(v_i)=f(v_i)$ for $1\le i\le n$.
Clearly, $g$ is an RDRD-function of $P_{n}$ with $w(g)=w(f)$. Thus $w(f)\ge n+2$. \\
Let $f(v_i)=f(v_{i+1})= 1$. As above, we remove the edge $v_iv_{i+1}$ and the resulted graph $P_n$ has an RDRD-function $g$ of weight at least $w(f)$.
That is  $w(f)\ge n+2$.  Therefore the proof is complete.
\end{proof}

\section{Upper bounds on the $RDRD$ number}
In this section we obtain sharp upper bounds on  the restrained double Roman domination number of a graph.
\begin{proposition}\label{2n-1} Let $G$ be a  connected graph  of order $n\ge 2$. Then
$\gamma_{rdR}(G) \le  2n-1$, with equality if and only if $n=2$.
\end{proposition}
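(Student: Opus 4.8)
The plan is to prove the inequality by an explicit RDRD function and then treat the equality statement by separating the cases $n=2$ and $n\ge 3$.

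For the bound $\gamma_{rdR}(G)\le 2n-1$, I would fix an arbitrary vertex $v$ and set $f(v)=1$ and $f(u)=2$ for all $u\neq v$. Because $V_0=\emptyset$, both the label-$0$ requirement and the restrained condition that $G[V_0]$ has no isolated vertex are vacuously satisfied, so the only thing to check is the label-$1$ requirement at $v$: since $G$ is connected and $n\ge 2$, the vertex $v$ has a neighbor, and that neighbor lies in $V_2$. Thus $f$ is an RDRD function of weight $1+2(n-1)=2n-1$.

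For equality, the direction ``$n=2$ implies equality'' is immediate from Observation \ref{the-com-par}(i), which gives $\gamma_{rdR}(K_2)=3=2\cdot 2-1$ (the only connected graph of order $2$ being $K_2$). For the reverse direction I would argue the contrapositive, showing that $n\ge 3$ forces the strict inequality $\gamma_{rdR}(G)\le 2n-2$. I split into two cases. If $G$ is not complete, choose two non-adjacent vertices $a,b$, put $f(a)=f(b)=1$ and $f(u)=2$ otherwise; non-adjacency guarantees that every neighbor of $a$ (and of $b$) is assigned $2$, so the label-$1$ condition holds at both, while $V_0=\emptyset$ again disposes of the remaining conditions. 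This yields weight $2n-2$. If $G=K_n$, then Observation \ref{the-com-par}(i) gives $\gamma_{rdR}(K_n)=3\le 2n-2$ for all $n\ge 3$. In either case $\gamma_{rdR}(G)\le 2n-2<2n-1$.

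The only point requiring care, and the place where the hypotheses are genuinely used, is verifying the label-$1$ condition in each construction: connectivity supplies a neighbor, and in the two-vertex construction the non-adjacency of $a$ and $b$ is what forces that neighbor to carry label $2$ rather than $1$. This is also exactly why the $2n-2$ construction cannot be pushed down to $n=2$, where $K_2$ admits no two non-adjacent vertices, consistent with equality holding precisely there.
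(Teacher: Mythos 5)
Your proof is correct and follows essentially the same strategy as the paper: the identical weight-$(2n-1)$ function for the upper bound, and an explicit weight-$(2n-2)$ RDRD function when $n\ge 3$ to settle the equality direction. The only difference is cosmetic --- the paper places the two labels $1$ on two neighbors $u,v$ of a single vertex $w$ of degree at least $2$ (which exists in every connected graph of order at least $3$, and whose label $2$ then certifies the label-$1$ condition at both $u$ and $v$), thereby avoiding your case split between complete and non-complete graphs, while your non-adjacent pair together with the separate appeal to $\gamma_{rdR}(K_n)=3$ works equally well.
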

\begin{proof} If $w$ is a vertex of $G$, then define the fuction $f$ by $f(w)=1$ and $f(x)=2$ for $x\in V(G)\setminus\{w\}$.
Since $G$ is connected of order $n\ge 2$, we observe that $f$ is an RDRD function of $G$ of weight $2n-1$ and thus $\gamma_{rdR}(G) \le  2n-1$.
If $n\ge 3$, then $G$ contains a vertex $w$ with at least two neighbors $u$ and $v$. Now define the function $g$ by $g(u)=g(v)=1$ and $g(x)=2$ for $x\in V(G)\setminus\{u,v\}$.
Then $g$ is an RDRD function of $G$ of weight $2n-2$ and so $\gamma_{rdR}(G) \le  2n-2$ in this case. Since $\gamma_{rdR}(K_2)=3=2\cdot 2-1$, the proof is complete.
\end{proof}

\begin{proposition}\label{diam} Let $G$ be a  connected graph  of order $n\ge 2$. Then
$\gamma_{rdR}(G) \le  2n+1 - diam(G)$ and this bound is sharp for the path $P_n$ ($n\ge 4$).
\end{proposition}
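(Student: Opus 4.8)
The plan is to exhibit an explicit RDRD function of weight $2n+1-\diam(G)$. Write $d=\diam(G)$. If $d\le 2$, then $2n+1-d\ge 2n-1$ and the bound follows at once from Proposition \ref{2n-1}; so assume $d\ge 3$. Pick two vertices at distance $d$ together with a shortest path $P\colon v_0v_1\cdots v_d$ joining them. Since $P$ is a geodesic it is an \emph{induced} path, i.e.\ $v_iv_j\in E(G)$ implies $|i-j|=1$. I would label the $d+1$ vertices of $P$ by an optimal RDRD function of the standalone path $P_{d+1}$ — one of the three labelings $h$ built in the proof of Theorem \ref{the-path}, available because $d+1\ge 4$ — and set $f(x)=2$ on each of the remaining $n-d-1$ vertices of $G$.

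The total weight is $\gamma_{rdR}(P_{d+1})+2(n-d-1)=(d+3)+2(n-d-1)=2n+1-d$, so everything reduces to checking that $f$ really is an RDRD function of $G$. Every vertex outside $P$ has label $2$ and therefore imposes no condition. For a vertex of $P$, the labeling $h$ already meets every local requirement using edges of $P$ only: in each of the three patterns the $0$-labeled vertices occur in consecutive adjacent pairs along $P$, so each of them has a $0$-neighbour in $P$ (the restrained condition) and is adjacent on $P$ to a $3$-labeled vertex (the double Roman condition), while each $1$-labeled vertex is adjacent on $P$ to a $3$-labeled vertex. Because $P$ is induced there are no chords to interfere, and the only edges joining $V(P)$ to the rest of $G$ lead to label-$2$ vertices, which can neither break an already-satisfied double Roman condition nor isolate a $0$-vertex. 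Hence $f$ is an RDRD function and $\gamma_{rdR}(G)\le 2n+1-d$.

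The only subtle step is this last verification at the ``seam'' between $P$ and the label-$2$ remainder — making sure that the restrained and double Roman conditions survive the extra edges — and it is settled by the two facts that $P$ is induced and that all external neighbours carry label $2$. For sharpness, $\diam(P_n)=n-1$, so the bound becomes $2n+1-(n-1)=n+2$, which is exactly $\gamma_{rdR}(P_n)$ for $n\ge 4$ by Theorem \ref{the-path}; thus the path $P_n$ attains equality.
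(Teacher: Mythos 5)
Your proof is correct and follows essentially the same route as the paper's: restrict an optimal RDRD labeling of a diametrical path $P$ (of weight $\diam(G)+3$) to $V(P)$ and assign $2$ to all remaining vertices, giving total weight $2n+1-\diam(G)$. You are in fact more careful than the paper at two points it glosses over --- the verification that the combined labeling still satisfies the restrained and double Roman conditions at the interface between $P$ and the label-$2$ vertices, and the case $\diam(G)\le 2$, where Theorem \ref{the-path} does not directly apply and your appeal to Proposition \ref{2n-1} cleanly closes the gap.
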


\begin{proof} By Theorem \ref{the-path}, $\gamma_{rdR}(P_n) \le n+2$. Let  $P=v_1v_2\cdots v_{diam(G)+1}$  be a diametrical path in $G$. Let $g$
be a $\gamma_{rdR}$-function of $P$. Then $w(g)\le diam(G)+3$. Now we define an RDRD-function $f$ as:\\
$$f(x)=\left\{
      \begin{array}{ll}
        2, & x \notin V(P),\\
        g(x), & \hbox{otherwise.}
      \end{array}
    \right.$$

 It is clear that $f$ is an RDRD-function of $G$ of weight $w(f) \le 2(n-(diam(G)+1)) + diam (G)+3$. Therefore $\gamma_{rdR}(G) \le  2n+1 - diam(G)$.\\
%For seeing the sharpness, let $P_n$ be the path, where $3\mid n$. Then $\gamma_{rdR}(P_n)=n+2= 2n+1 - (n-1)$.
 Theorem \ref{the-path} shows the sharpness of this bound.
\end{proof}

\begin{proposition} Let $G$ be a  connected graph  of order $n$ and circumference $c(G)<\infty$. Then
$\gamma_{rdR}(G) \le  2n +2 - c(G)$, and this bound is sharp for each cycle $C_n$ with $3 \nmid n$.
\end{proposition}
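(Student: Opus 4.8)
The plan is to mimic the proof of Proposition~\ref{diam}, replacing the diametrical path by a longest cycle. Let $C = v_1 v_2 \cdots v_c v_1$ be a cycle of length $c = c(G)$ in $G$, and let $g$ be a $\gamma_{rdR}$-function of the cycle $C_c$ regarded on the vertex set $V(C)$. I would then extend $g$ to a function $f$ on all of $G$ by keeping $f(x) = g(x)$ for $x \in V(C)$ and setting $f(x) = 2$ for every $x \in V(G) \setminus V(C)$.

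The key step is to check that $f$ is an RDRD function of $G$. Since every off-cycle vertex receives the value $2$, the only vertices that must be verified are those on $C$ carrying a $0$ or a $1$. Here the point is that each edge of $C$ is an edge of $G$, so every neighbor that $g$ used inside $C_c$ to satisfy the double Roman conditions (two neighbors of weight $2$ or one neighbor of weight $3$ for a zero vertex, and one neighbor of weight at least $2$ for a vertex of weight $1$) remains available in $G$; chords or other extra edges can only help. The same observation handles the restrained condition: the set $V_0^f$ equals $V_0^g$, and since $g$ is an RDRD function of $C_c$, every zero vertex already has a zero neighbor along the cycle, so $G[V_0^f]$ has no isolated vertex. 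Thus $f$ is an RDRD function of $G$.

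It remains to bound the weight. By Theorem~\ref{the-cycle} we have $\gamma_{rdR}(C_c) \le c + 2$ regardless of the residue of $c$ modulo $3$, whence
$$w(f) = w(g) + 2\bigl(n - c\bigr) \le (c + 2) + 2(n - c) = 2n + 2 - c(G),$$
giving $\gamma_{rdR}(G) \le 2n + 2 - c(G)$. For sharpness, take $G = C_n$ with $3 \nmid n$: then $c(G) = n$, so the bound reads $2n + 2 - n = n + 2$, which is exactly the value $\gamma_{rdR}(C_n) = n + 2$ furnished by Theorem~\ref{the-cycle}.

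I would expect the only genuine obstacle to be the verification in the second paragraph that the cycle's local double Roman conditions survive the embedding into $G$; once one notes that all cycle edges persist and that the zero-vertices of $f$ are precisely those of $g$, this is routine, and no real difficulty remains.
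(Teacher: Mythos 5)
Your proposal is correct and follows essentially the same route as the paper: extend a $\gamma_{rdR}$-function of a longest cycle by assigning $2$ to all off-cycle vertices, bound the weight by $(c+2)+2(n-c)$ using Theorem~\ref{the-cycle}, and verify sharpness on $C_n$ with $3\nmid n$. Your second paragraph spells out the verification that the paper dismisses as ``clear,'' which is a welcome addition but not a different argument.
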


\begin{proof} Let $C$ be a longest cycle of $G$, that means $|V(C)|=c(G)$. By Theorem \ref{the-cycle}, $\gamma_{rdR}(C) \le c(G)+2$. Let $h$
be a $\gamma_{rdR}$-function on $C$. Then $w(h)\le c(G)+2$. Now we define an RDRD-function $f$ as:\\
$$f(x)=\left\{
      \begin{array}{ll}
        2, & x\notin V(C), \\
        h(x), & \hbox{otherwise.}
      \end{array}
    \right.$$\\

 It is clear that $f$ is an RDRD-function of $G$ of weight $w(f) \le 2(n-c(G)) + c(G)+2$. Therefore $\gamma_{rdR}(G) \le  2n+2 - c(G)$.\\
 For sharpness, if $G=C_n$ and $3\nmid n$, then $\gamma_{rdR}(C_n)=n+2= 2n+2 - n=2n+2-c(G)$.
\end{proof}

\begin{observation}\label{1}
Let $G$ be a graph and $f=(V_0,V_1,V_2)$ a $\gamma_{rR}$-function of $G$. Then $\gamma_{rdR}(G)\leq 2|V_1|+3|V_2|$.
\end{observation}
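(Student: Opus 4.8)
The plan is to construct a restrained double Roman dominating function $g$ directly from the given $\gamma_{rR}$-function $f=(V_0,V_1,V_2)$ by shifting each nonzero label up by one. Concretely, I would set $g(v)=0$ for $v\in V_0$, $g(v)=2$ for $v\in V_1$, and $g(v)=3$ for $v\in V_2$. The weight is then immediately $w(g)=2|V_1|+3|V_2|$, so once $g$ is shown to be an $RDRD$ function the inequality $\gamma_{rdR}(G)\le 2|V_1|+3|V_2|$ follows at once from the minimality in the definition of $\gamma_{rdR}$.

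It remains to check the defining properties of an $RDRD$ function. For the double Roman condition (a), observe that the $0$-class of $g$ is exactly $V_0$; since $f$ is a Roman dominating function, every $v\in V_0$ has a neighbor $w\in V_2$, and this $w$ satisfies $g(w)=3$, so $v$ has a neighbor in the $3$-class as required. Condition (b) is vacuously true, because the construction never assigns the label $1$. Finally, the restrained condition for $g$ asks that $G$ restricted to $\{v:g(v)=0\}=V_0$ have no isolated vertex, and this is exactly the restrained property already guaranteed by $f$ being a \emph{restrained} Roman dominating function.

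There is no serious obstacle here; the only design point worth flagging is the choice to send $V_1$ to the label $2$ rather than keeping it at $1$. A vertex of $V_1$ need not have a neighbor of positive weight in a restrained Roman dominating function, so retaining the label $1$ could violate the double Roman condition (b); raising these vertices to $2$ sidesteps that issue and is precisely what produces the coefficient $2$ on $|V_1|$ in the stated bound.
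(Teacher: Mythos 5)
Your proposal is correct and follows essentially the same route as the paper: both shift $V_1\mapsto 2$ and $V_2\mapsto 3$, keep $V_0$ at $0$, and verify that condition (a) is met via the Roman neighbor in $V_2$, that condition (b) is vacuous, and that the restrained property on $G[V_0]$ is inherited from $f$. Your closing remark on why $V_1$ must be raised to $2$ rather than left at $1$ is a useful clarification that the paper states only implicitly.
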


\begin{proof}
Let $G$ be a graph and $f=(V_0,V_1,V_2)$ a $\gamma_{rR}$-function of $G$. We define a function $g=(V_0',V_2',V_3')$ as follows:
$V_0'=V_0$, $V_2'=V_1$, $V_3'=V_2$. Note that under $g$, every vertex with a label $0$ has a neighbor assigned $3$ and each vertex with
label $1$ becomes a vertex with label $2$ and also $G[V_0']$ has no isolated vertex. Hence, $g$ is a  restrained double Roman dominating function.
Thus, $\gamma_{rdR}(G)\leq 2|V_2'|+3|V_3'|=2|V_1|+3|V_2|$.
\end{proof}

Clearly, the bound of observation \ref{1} is sharp, as can be seen with the path $G=P_4$, where $\gamma_{rR}(G)=4$ and $\gamma_{rdR}(G)=6$.
We also note that strict inequality in the bound can be achieved by the subdivided star $G=S(K_{1,k})$ which formed by subdividing each edge of
the star $K_{1,k}$, for $k\geq 3$, exactly once. Then it is simple to check that $\gamma_{rR}(G)=2k+1$ and $\gamma_{rdR}(G)=3k$. Hence, $|V_1|=1$
and $|V_2|=k$, and so, $3k=\gamma_{rdR}(G)<2|V_1|+3|V_2|=2+3k.$

\begin{lemma}\label{lem1} If a graph $G$ has a non-pendant edge, then there  is a $\gamma_{rdR}(G)$-function $f = (V_0, V_1, V_2, V_3)$ such that
$V_0\cup V_1 \ne \emptyset$.
\end{lemma}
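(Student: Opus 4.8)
The plan is to argue by contradiction, showing that an ``all heavy'' labelling can never be optimal once the graph carries the guaranteed edge. Concretely, I would start from an arbitrary $\gamma_{rdR}(G)$-function $f=(V_0,V_1,V_2,V_3)$ and suppose, for contradiction, that $V_0\cup V_1=\emptyset$, so that $f(x)\ge 2$ for every $x\in V(G)$. Let $uv$ be the non-pendant edge supplied by the hypothesis; in particular $u$ has the neighbour $v$, and $f(v)\ge 2$.

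Next I would perform a single local decrease: define $g$ by $g(u)=1$ and $g(x)=f(x)$ for every $x\ne u$. The step requiring care is checking that $g$ is still an RDRD function, and this is exactly where the case assumption pays off. Since $f$ assigns no label $0$, we have $V_0(g)=\emptyset$, so condition (a) and the restraint condition that $G[V_0]$ has no isolated vertex both hold vacuously; the only new constraint is condition (b) for the unique vertex now carrying label $1$, namely $u$, and it is satisfied because its neighbour $v$ has $g(v)=f(v)\ge 2$. No other vertex is affected, because every remaining vertex keeps a label $\ge 2$ and therefore imposes no requirement on its neighbours. Hence $g$ is an RDRD function with $w(g)=w(f)-(f(u)-1)\le w(f)-1<w(f)=\gamma_{rdR}(G)$, contradicting the minimality of $f$.

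Therefore no $\gamma_{rdR}(G)$-function can have $V_0\cup V_1=\emptyset$, and in particular there is a $\gamma_{rdR}(G)$-function with $V_0\cup V_1\ne\emptyset$, as claimed (indeed, \emph{every} such function has this property). I expect the only genuinely delicate point to be the bookkeeping of the four defining conditions after recolouring, and the observation that powers the whole argument is that in the excluded case there are simply no zero-labelled vertices to re-dominate, so dropping one heavy label to $1$ costs nothing in feasibility while strictly saving weight. Note that the argument needs only that $u$ has some neighbour of label $\ge 2$, which any edge provides, so the non-pendant hypothesis is comfortably sufficient; it is precisely the structure one would exploit if one preferred to create a zero-labelled vertex by recolouring both endpoints $u,v$, in which case the extra degree-$\ge 2$ neighbours of $u$ and $v$ would be needed to re-dominate the resulting zeros.
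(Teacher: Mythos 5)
Your proof is correct, but it takes a genuinely different route from the paper's. The paper proves the lemma by explicitly constructing a low-weight RDRD function that \emph{creates} zeros: it assigns $0$ to both endpoints $u,w$ of the non-pendant edge, assigns $3$ to a common neighbour (or to one neighbour of each), and $2$ elsewhere, giving $\gamma_{rdR}(G)\le 2n-2$ (and $\le 2n-3$ when $u$ and $w$ have a common neighbour), whence any optimal function must have $V_0\cup V_1\ne\emptyset$. The non-pendant hypothesis is essential there, both to find the degree-$\ge 2$ neighbours that re-dominate the new zeros and to keep $G[V_0]=G[\{u,w\}]$ free of isolated vertices. You instead run a local-improvement argument on a hypothetical all-$\ge 2$ optimal labelling, dropping one endpoint to label $1$; since no zeros are created, all conditions are vacuous except condition (b) at $u$, which any single edge supplies. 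Your argument is shorter, needs only that $G$ has an edge rather than a non-pendant edge, and proves the stronger conclusion that \emph{every} $\gamma_{rdR}(G)$-function satisfies $V_0\cup V_1\ne\emptyset$. What you lose is the sharper numerical bound: the paper's construction yields $\gamma_{rdR}(G)\le 2n-3$ in the triangle case, which the authors reuse immediately afterwards as a standalone proposition, whereas your argument only certifies $\gamma_{rdR}(G)\le 2n-1$. For the lemma as stated, though, your proof is complete and valid.
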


\begin{proof}
 If $\gamma_{rdR}(G)<2n$, then obviously $V_0\cup V_1 \ne \emptyset$. Now we show that $\gamma_{rdR}(G)<2n$.
Let $uw$ be a non-pendant edge with $\deg(u)$ and $\deg(w)$ be at least $2$.\\
Assume that $N_{G}(u)\cap N_{G}(w)\ne \emptyset$, and let $v$ be a  vertex in $N_{G}(u)\cap N_{G}(w)$. Then the function
$f=(V_0 =\{u,w\}, V_1 =\emptyset, V_2= V(G) \setminus \{u,w,v\}, V_3=\{v\})$
is an RDRD-function of $G$ with $w(f)\le 2n-3$.\\
%If $f=(V_0=\emptyset, V_1=\{u,w\}, V_2= V(G) \setminus \{u,w\}, V_3=\emptyset)$, then $f$ is an RDRD-function of $G$ with $w(f)\le 2n-2$.\\
Assume that  $N_{G}(u)\cap N_{G}(w)= \emptyset$, and let $a\in N_G(u)\setminus \{w\}$ and $b\in N_G(w)\setminus \{u\}$.
Then the function $f=(V_0= \{u,w\}, V_1=\emptyset, V_2= V(G) \setminus \{u,w,a,b\}, V_3=\{a,b\})$ is an RDRD-function of $G$
with $w(f)\le 2n-2$. This completes the proof.
%If  $f=(V_0=\emptyset,  V_1=\{u,w\}, V_2= V(G) \setminus \{u,w\}, V_3=\emptyset)$, then $f$ is an RDRD-function of $G$ with $w(f)\le 2n-2$.
All in all the proof is complete.
\end{proof}
\vspace{3mm}

\begin{figure}[h]
\tikzstyle{every node}=[circle, draw, fill=black!, inner sep=0pt,minimum width=.16cm]
\begin{center}
\begin{tikzpicture}[thick,scale=.6]
  \draw(0,0) { % <-- START CO-ORDINATES
  %$G$
 +(-1,1) node{}
    +(1,1) node{}
 +(0,-1) node{}

 +(-1,1) -- +(1,1)
 +(0,-1) -- +(-1,1)
 +(0,-1) -- +(1,1)

 %%%%%%%%%%%%%%%%%%%%%%%%%%%%%

 +(-1,-3) node{}
    +(1,-3) node{}
 +(0,-1) node{}

 +(-1,-3) -- +(1,-3)
 +(0,-1) -- +(-1,-3)
 +(0,-1) -- +(1,-3)

 %%%%%%%%%%%%%%%%%%%%%%%%%%%%%

 +(-2,0) node{}
    +(-2,-2) node{}
 +(0,-1) node{}

 +(-2,0) -- +(-2,-2)
 +(0,-1) -- +(-2,-0)
 +(0,-1) -- +(-2,-2)

 %%%%%%%%%%%%%%%%%%%%%%%%%%%%%

 +(2,0) node{}
    +(2,-2) node{}
 +(0,-1) node{}

 +(2,0) -- +(2,-2)
 +(0,-1) -- +(2,0)
 +(0,-1) -- +(2,-2)
%%%%%%%%%%%%%%%%%%%%%%%%%%%%

 +(2,-3) node{}

  +(0,-1) -- +(2,-3)
  +(0.3,-3.8) node[rectangle, draw=white!0, fill=white!100]{ $\small{H_{10}}$}
 %%%%%%%%%%%%%%%%%%%%%%%%%%%%%%%%%%%%%%%%%%%%%%%%%%%%%%%%%%%%%%%%%%%%%%%%
 %%%%%%%%%%%%%%%%%%%%%%%%%%%%%%%%%%%%%%%%%%%%%%%%%%%%%%%%%%%%%%%%%%%%%%%%%
 %%%%%%%%%%%%%%%%%%%%%%%%%%%%%%%%%%%%%%%%%%%%%%%%%%%%%%%%%%%%%%%%%%%%%%%%

 +(6,1) node{}
    +(8,1) node{}
 +(7,-1) node{}

 +(6,1) -- +(8,1)
 +(7,-1) -- +(6,1)
 +(7,-1) -- +(8,1)

 %%%%%%%%%%%%%%%%%%%%%%%%%%%%%

 +(6,-3) node{}
    +(8,-3) node{}
 +(7,-1) node{}

 +(6,-3) -- +(8,-3)
 +(7,-1) -- +(6,-3)
 +(7,-1) -- +(8,-3)

 %%%%%%%%%%%%%%%%%%%%%%%%%%%%%

 +(5,0) node{}
    +(5,-2) node{}
 +(7,-1) node{}

 +(5,0) -- +(5,-2)
 +(7,-1) -- +(5,-0)
 +(7,-1) -- +(5,-2)

 %%%%%%%%%%%%%%%%%%%%%%%%%%%%%

 +(9,0) node{}
    +(9,-2) node{}
 +(7,-1) node{}

 +(9,0) -- +(9,-2)
 +(7,-1) -- +(9,0)
 +(7,-1) -- +(9,-2)
%%%%%%%%%%%%%%%%%%%%%%%%%%%%
+(7.2,-3.8) node[rectangle, draw=white!0, fill=white!100]{ $\small{F_{9}}$}

 };
\end{tikzpicture}
\end{center}
\caption{The  graph $H_{10},\ F_{9}$ .}\label{fig:g1-g2-g3}
\end{figure}
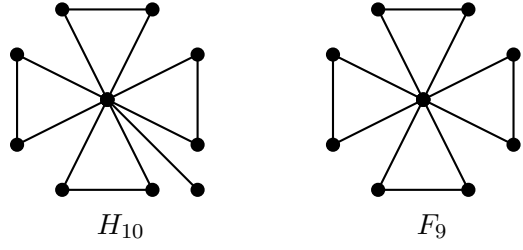

For any integer  $n \ge 3$, let $H_n$  be the graph obtained from $(n-2)/2$ copies
of $K_2$ and a copy of $K_1$ by adding a new vertex and joining it to both leaves of each $K_2$ and the given $K_1$, and
let $F_n$  be the graph obtained from $(n-2)/2$ copies of $K_2$ by adding a new vertex and joining it to both leaves of each $K_2$. Thus
 for $n \ge 4$, $H_n$ have a vertex of degree $n-1$, a vertex of degree $1$  and  other vertices
of degree two and for $n \ge 3$, $F_n$ have a vertex of degree $n-1$ and  other vertices
of degree two. Figure 2 shows the graph $H_{10}$ and $F_9$. Let $\mathcal{H} = \{H_n :n \ge 4\ \mbox{is\ even}\}$ and
$\mathcal{F} = \{F_n :n \ge 3\ \mbox{is\ odd}\}$.

\begin{theorem}\label{the} For every connected graph $G$ of order $n \ge 3$ with $m$ edges,
$\gamma_{rdR}(G) \ge 2n + 1- \lceil(4m-1)/3\rceil$, with equality if and only if
$G \in  \mathcal{H}\cup \mathcal{F}$  or $G\in\{K_{1,2},K_{1,3},K_{1,4}\}$.
\end{theorem}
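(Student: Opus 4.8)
The plan is to pass to a complementary quantity. For an RDRD function $f=(V_0,V_1,V_2,V_3)$ write $S(f)=2|V_0|+|V_1|-|V_3|$; since $2n=2|V_0|+2|V_1|+2|V_2|+2|V_3|$, one checks immediately that $w(f)=2n-S(f)$. Because $\gamma_{rdR}(G)=2n-\max_f S(f)$, the asserted inequality is equivalent to saying that $S(f)\le \lceil 4(m-1)/3\rceil$ for every RDRD function $f$, and as $S(f)$ is an integer this is in turn equivalent to the clean arithmetic statement $3S(f)\le 4m-2$. I would prove this last inequality for an arbitrary $f$, so that no appeal to optimality is needed for the bound itself.

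The heart of the bound is an edge count. Split $V_0=A\cup B$, where $A$ is the set of zero-vertices having a neighbor in $V_3$ and $B$ is the set of those with no neighbor in $V_3$ (these must then have at least two neighbors in $V_2$). I would exhibit four pairwise edge-disjoint families: the edges inside $G[V_0]$ (at least $|V_0|/2$, since $G[V_0]$ has no isolated vertex), one $A$--$V_3$ edge per vertex of $A$, two $B$--$V_2$ edges per vertex of $B$, and one $V_1$--$(V_2\cup V_3)$ edge per vertex of $V_1$; these are disjoint because they are distinguished by their endpoint classes. This yields $m\ge \tfrac12|V_0|+|A|+2|B|+|V_1|$, whence, after multiplying by $4$ and subtracting $3S(f)=6|A|+6|B|+3|V_1|-3|V_3|$, the key estimate $4m-3S(f)\ge 4|B|+|V_1|+3|V_3|$. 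It then remains to see that the right-hand side is at least $2$: if $V_3\ne\emptyset$ it is at least $3$; if $V_3=\emptyset$ but $V_0\ne\emptyset$ then $B=V_0$ has at least two vertices (no isolated vertex in $G[V_0]$), giving at least $8$; and the residual cases with $V_0=V_3=\emptyset$ are settled by connectivity through $m\ge n-1\ge 2$.

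For the characterization, the easy direction is to check directly that each graph in $\mathcal{H}\cup\mathcal{F}\cup\{K_{1,2},K_{1,3},K_{1,4}\}$ admits an RDRD function meeting the bound (for a windmill $F_n$ put $3$ on the hub and $0$ elsewhere; for $H_n$ add a $1$ on the pendant; for the small stars put $2$ on the center and $1$ on the leaves), so that equality holds by the inequality just proved. The substantive direction assumes $\gamma_{rdR}(G)$ equals the bound, fixes a $\gamma_{rdR}(G)$-function $f$ (so $S(f)$ is maximum), and reads off the equality conditions in every step above: $G[V_0]$ is a perfect matching, each vertex of $A$ has exactly one neighbor in $V_3$, each vertex of $B$ has exactly two in $V_2$, each vertex of $V_1$ exactly one in $V_2\cup V_3$, and crucially $G$ has no edges beyond those four families. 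Since $4m-3S(f)$ is now forced to one of the values $2,3,4$ (according to $m$ modulo $3$), the residual $4|B|+|V_1|+3|V_3|$ is pinned to a small value, and a short case analysis on it---using connectedness to collapse any would-be second component and to force $V_2$ or $V_3$ to a single vertex---recovers precisely the windmills $F_n$, the windmills with a pendant $H_n$, and the three small stars.

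The main obstacle is the reverse direction of the characterization, not the bound. Two points need care. First, translating ``no extra edges plus prescribed local degrees'' into a rigid global shape while keeping the graph connected is what forces the hub-and-triangles pattern rather than some ad hoc configuration. Second, the bookkeeping around the ceiling must be handled, namely matching the exact value of $4m-3S(f)$ to the residual $4|B|+|V_1|+3|V_3|$ across the three residue classes of $m$ modulo $3$, while also confirming that the degenerate small-residual possibilities (such as $V_0=\emptyset$ with $|V_1|\le 1$, or an all-twos function) cannot be optimal for $n\ge 3$. I would organize the whole equality argument as a single case split on $4|B|+|V_1|+3|V_3|\in\{2,3,4\}$, each case yielding exactly one family in the list.
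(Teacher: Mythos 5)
Your argument is correct in substance and, for the lower bound, is a genuinely cleaner packaging of what the paper does. The underlying edge count is the same --- the paper's $V_0'$ and $V_0''$ play exactly the roles of your $B$ and $A$, and its inequality $m\ge |V_0|/2+2|V_0'|+|V_0''|+|V_1|$ is your four-family count --- but your reformulation via $S(f)=2|V_0|+|V_1|-|V_3|$ together with the integrality step ($S(f)\le\lceil(4m-4)/3\rceil$ iff $3S(f)\le 4m-2$) buys two real simplifications: you prove the estimate for an \emph{arbitrary} RDRD function, so you need neither the paper's Lemma \ref{lem1} (existence of an optimal $f$ with $V_0\cup V_1\ne\emptyset$) nor its separate treatment of stars, and the single residual inequality $4m-3S(f)\ge 4|B|+|V_1|+3|V_3|$ collapses the paper's scattered case analysis on $|V_0'|$, $|V_1|$, $|V_3|$ into one line plus the degenerate case $V_0=V_3=\emptyset$, $|V_1|\le 1$, which you correctly settle using $m\ge n-1\ge 2$. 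For the equality characterization your skeleton is right and does close (equality forces $4|B|+|V_1|+3|V_3|\le 4$ together with zero slack in the edge count, so $|V_3|\le 1$, $B=\emptyset$ or $V_0=\emptyset$, any vertex of $V_2$ would be edgeless unless $V_2=\emptyset$, and connectivity then pins down the shape), but it remains a sketch: the case analysis is not written out, and one organizational claim is slightly off --- the values $3$ and $4$ of $4|B|+|V_1|+3|V_3|$ each yield \emph{two} families ($\mathcal{F}$ versus $K_{1,3}$, and $\mathcal{H}$ versus $K_{1,4}$, separated by whether $V_3=\emptyset$), so the secondary split on $V_3$ must be made explicit rather than having ``each case yield exactly one family.'' If you write that case analysis out in full, your proof is complete and in fact somewhat tighter than the paper's own, whose converse direction is itself rather terse.
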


\begin{proof}
 If $G=K_{1,n-1}$ is a star, then $\gamma_{rdR}(G)=n+1$ and $m=n-1$. Now it is easy to see that
 $\gamma_{rdR}(K_{1,n-1})= 2n + 1- \lceil(4m-1)/3\rceil$ for $3\le n\le 5$ and
$\gamma_{rdR}(K_{1,n-1})>2n + 1- \lceil(4m-1)/3\rceil$ for $n\ge 6$.
Next assume that  $G$ is not a star.  By Lemma \ref{lem1} there is a $\gamma_{rdR}(G)$-function of $f= (V_0, V_1, V_2, V_3)$  such that
$V_0\cup V_1\ne \emptyset$. It is well known that,
 the induced subgraph $G[V_0]$ has no isolated vertex. Therefore, $|E(G[V_0])| \ge |V_0|/2$. Let $V'_0=\{v\in V_0: N(v) \subseteq V_2\}$ and
 $V''_0=\{v\in V_0: v \,\,{has\,\,a\,\,neighbor\,\,in}\,\,V_3\}$. Then $|E(V_0,V_2)| \ge 2|V'_0|$, $|E(V_0,V_3)| \ge |V''_0|$ and
 $|E(V_1,V_2\cup V_3)| \ge |V_1|$.
 Therefore

 $$|E(G)|=m\ge |V_0|/2+ 2|V'_0|+ |V''_0|+ |V_1|.$$
 Since $|V_0|=|V'_0|+|V''_0|$, we deduce that
 \begin{equation}\label{EQ11}
 (4m-1)/3\ge 2|V_0|+ 4/3|V'_0|+ 4/3|V_1|-1/3
 \end{equation}

  and thus
  \begin{equation}\label{EQ12}
 2n+1 - \lceil(4m-1)/3\rceil \le 2n+1 - (4m-1)/3 \le 2n+1 -2|V_0|-4/3|V'_0|- 4/3|V_1|+1/3.
 \end{equation}

Since $\gamma_{rdR}(G) = |V_1|+2|V_2|+3|V_3|$, $|V_0|+|V_1|+|V_2|+|V_3|=n$ and $2n+1=2|V_0|+2|V_1|+2|V_2|+2|V_3|+1$, we obtain
\begin{eqnarray*}
2n+1 -2|V_0|-4/3|V'_0|- 4/3|V_1|+1/3 & = & -4/3|V'_0|+ 2/3|V_1|+2|V_2|+2|V_3|+4/3\\
& = & \gamma_{rdR}(G)-4/3|V'_0|-1/3|V_1|-|V_3|+4/3.
\end{eqnarray*}
 Next we will show that
\begin{equation}\label{EQ13}
 \gamma_{rdR}(G)-4/3|V'_0|-1/3|V_1|-|V_3|+4/3\le \gamma_{rdR}(G)
\end{equation}
or $\gamma_{rdR}(G) \ge 2n + 1- \lceil(4m-1)/3\rceil$.
If  $|V'_0|\ge 1$, then $-4/3|V'_0|-1/3|V_1|-|V_3|+4/3\le 0$ and so $\gamma_{rdR}(G)-4/3|V'_0|-1/3|V_1|-|V_3|+4/3\le \gamma_{rdR}(G)$.\\
Let now  $|V'_0|=0$. Note that the condition $V_0\cup V_1\ne \emptyset$ implies $V''_0 \cup V_1\ne \emptyset$.\\

Assume next that $V_1= \emptyset$. We deduce that  $|V''_0|\ge 1$ and therefore $|V_3|\ge 1$. If there are at least two vertices of weight $3$,
then $\gamma_{rdR}(G)-4/3|V'_0|-1/3|V_1|-|V_3|+4/3<\gamma_{rdR}(G)$.\\
If there is only one vertex of weight $3$, then $m\ge n-1+\frac{n-1}{2}=\frac{3(n-1)}{2}$. We deduce that
$\gamma_{rdR}(G)\ge 3  \ge 2n+1 -\left\lceil \frac{6(n-1)-1}{3}\right\rceil
\ge 2n+1 -\left\lceil \frac{4m-1}{3}\right\rceil$, with equality if and only if $|V_2|=0$, $n$ is odd and $m=\frac{3(n-1)}{2}$, that means
$G\in{\cal F}$.\\

Now assume that $|V_1|\ge 1$.  If $|V''_0|\ge 1$, then  $|V_3|\ge 1$ and thus $\gamma_{rdR}(G)-4/3|V'_0|-1/3|V_1|-|V_3|+4/3\le \gamma_{rdR}(G)$.
Next let $|V''_0|=0$. If  $|V_3|\ge 1$, then $\gamma_{rdR}(G)-4/3|V'_0|-1/3|V_1|-|V_3|+4/3\le \gamma_{rdR}(G)$. Now assume that $|V_3|=0$.
This implies that all vertices have weight $1$ or $2$. If $3\le n\le 5$, then it is easy to see that
$\gamma_{rdR}(G)> 2n+1-\left\lceil\frac{4m-1}{3}\right\rceil$.
Let now $n\ge 6$. If $|V_1|\ge 5$, then $\gamma_{rdR}(G)-4/3|V'_0|-1/3|V_1|-|V_3|+4/3<\gamma_{rdR}(G)$. Otherwise $|V_1|\le 4$, $|V_2|\ge n-4$ and
$m\ge n-1$. This implies
$$\gamma_{rdR}(G)\ge 2(n-4)+4=2n-4>2n+1-\left\lceil\frac{4(n-1)-1}{3}\right\rceil\ge 2n+1-\left\lceil\frac{4m-1}{3}\right\rceil.$$
 \\.

Thus $\gamma_{rdR}(G) \ge 2n+1 -(4m-1)/3\ge 2n+1 -\lceil(4m-1)/3\rceil$.\\
For equality: If $G\in \mathcal{H}$, then $G=H_n$ for $n\ge 4$ even and $|E(H_n)|=3(n-2)/2+1$.
Thus $2n+1- (4(3(n-2)/2+1)-1)/3= 2n+1- \lceil (4(3(n-2)/2+1)-1)/3 \rceil = 2n+1 -2(n-2)-1=4= \gamma_{rdR}(H_n)$.
If $G\in \mathcal{F}$, then $G=F_n$ for $n\ge 3$ odd and $|E(F_n)|=3(n-1)/2$.
Thus $2n+1- \lceil(4(3(n-1)/2))-1/3\rceil = 2n+1 -2(n-1)=3= \gamma_{rdR}(F_n)$.

Conversely, assume that $\gamma_{rdR}(G)=2n+1-\lceil(4m-1)/3\rceil$. Then all inequalities   occurring in the proof become equalities.
In the case  $|V_1|=0$, we have seen above that we have equality if and only if $G\in{\cal F}$. In the case $|V_1|\ge 1$, we have seen above
that $|V_3|\ge 1$. Therefore the equality in Inequality
(\ref{EQ13}) leads to $|V_3|=|V_1|=1$ and $|V'_0|=0$. Hence $V_0=V''_0$.  Thus equality in Inquality (\ref{EQ11}) or equivalently, in the
inequality $|E(G)|=m\ge |V_0|/2+ 2|V'_0|+ |V''_0|+ |V_1|$
leads  to $m=3/2|V''_0|+1$. Now let the vertices $v, u$ be of weight $3,1$ respectively.
Then $m=|E(G)| \ge |E(v, V''_0)| + G[V''_0] +1 \ge |V''_0|+ 1/2|V''_0|+1=3/2|V''_0|+1$.
If $|V_2|\ne 0$, then the connectivity of $G$ leads to the contradiction $m\ge 3/2|V''_0|+2$. Consequently, $|V_2|=0, |V_0|=(2m-2)/3$ and $u$ and $v$
are adjacent. Since $G$ is connected, $G\in \mathcal{H}$.
\\
\end{proof}

\section{$RDRD$-set versus $RRD$-set}
One of the aim of  studying these parameters is that to see the related  between them and compare each together.

\begin{proposition}\label{cor1}
For any graph $G$, $\gamma_{rdR}(G)\leq 2\gamma_{rR}(G)$ with equality if and only if $G=\overline{K_n}$.
\end{proposition}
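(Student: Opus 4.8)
The plan is to obtain the inequality as an immediate consequence of Observation~\ref{1}, and then to extract the equality case by reading off structural information from the slack in that same chain of inequalities.

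For the inequality I would begin with a $\gamma_{rR}$-function $f=(V_0,V_1,V_2)$. Observation~\ref{1} supplies $\gamma_{rdR}(G)\le 2|V_1|+3|V_2|$, while by definition $\gamma_{rR}(G)=|V_1|+2|V_2|$, so $2\gamma_{rR}(G)=2|V_1|+4|V_2|\ge 2|V_1|+3|V_2|\ge \gamma_{rdR}(G)$. The crucial feature here is that the difference between these two upper bounds is exactly $|V_2|$, and this is what will govern the analysis of equality.

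For the easy direction of the equality I would evaluate both parameters on $\overline{K_n}$ directly. Since $\overline{K_n}$ has no edges, a vertex of weight $0$ can never acquire the $2$-neighbor demanded by the Roman condition, so every restrained Roman dominating function must have $V_0=\emptyset$; the all-ones assignment is therefore forced and $\gamma_{rR}(\overline{K_n})=n$. For the same reason no vertex may receive $0$ or $1$ under an $RDRD$-function (a label-$1$ vertex would need a neighbor of weight at least $2$), so the all-twos assignment is forced and $\gamma_{rdR}(\overline{K_n})=2n=2\gamma_{rR}(\overline{K_n})$, giving equality.

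The reverse implication is where the real work lies, and I expect it to be the main obstacle. Assuming $\gamma_{rdR}(G)=2\gamma_{rR}(G)$ and feeding a $\gamma_{rR}$-function into the chain above yields $2\gamma_{rR}(G)=\gamma_{rdR}(G)\le 2\gamma_{rR}(G)-|V_2|$, which forces $|V_2|=0$; the Roman condition then again forces $V_0=\emptyset$, so $f$ is all-ones, $\gamma_{rR}(G)=n$, and hence $\gamma_{rdR}(G)=2n$. It remains to rule out any edge. If $uv\in E(G)$, I would exhibit the assignment giving $u$ weight $1$, $v$ weight $2$, and every other vertex weight $2$: here $V_0=\emptyset$, the label-$1$ condition at $u$ is met by $v$, and the weight is $2n-1<2n$, contradicting $\gamma_{rdR}(G)=2n$. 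Thus $G$ is edgeless, i.e.\ $G=\overline{K_n}$. The delicate points to verify carefully are that the forcing $|V_2|=0\Rightarrow V_0=\emptyset$ genuinely uses that $f$ is a minimum restrained Roman dominating function, and that the explicit weight-$(2n-1)$ assignment is indeed a valid $RDRD$-function.
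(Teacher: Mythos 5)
Your proof is correct and follows essentially the same route as the paper: the inequality comes from Observation~\ref{1} with slack exactly $|V_2|$, equality forces $V_2=\emptyset$ and hence $V_0=\emptyset$, and an edge is then ruled out by exhibiting a weight-$(2n-1)$ $RDRD$-function. You are in fact somewhat more careful than the paper, which leaves the forward direction ($\overline{K_n}$ attains equality) implicit and states the final edge-ruling-out step rather loosely; your explicit assignment $f(u)=1$, $f(x)=2$ otherwise makes that step airtight.
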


\begin{proof}
Let $f=(V_0,V_1,V_2)$ be a $\gamma_{rR}$-function of $G$. Since $\gamma_{rR}(G)=|V_1|+2|V_2|$, by Observation \ref{1}, we have that
$\gamma_{rdR}(G)\leq 2|V_1|+3|V_2|=\gamma_{rR}(G)+|V_1|+|V_2|\leq 2\gamma_{rR}(G)$.
If $\gamma_{rdR}(G)=2\gamma_{rR}(G)=2|V_1|+4|V_2|$, then since $\gamma_{rdR}(G)\leq 2|V_1|+3|V_2|$, we must have $V_2=\emptyset$. Hence,
$V_0=\emptyset$ must hold, and so $V=V_1$. By definition of $\gamma_{rR}$-function, we deduce that no two vertices in $G$ are adjacent, for otherwise,
if $u$ and $v$ are adjacent, then only one of them in every $\gamma_{rdR}$-function on $G$ has a label of $2$ which contradicts with
$\gamma_{rdR}(G)=2\gamma_{rR}(G)$.
\end{proof}

%\begin{observation} For any graph $G$, $\gamma_{rR}(G) \le \gamma_{rdR}(G)$.
%\end{observation}

The proof of Lemma \ref{lem1} shows the next proposition.

\begin{proposition} If $G$ contains a triangle, then $\gamma_{rdR}(G) \le 2n-3$.
\end{proposition}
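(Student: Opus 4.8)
The plan is to observe that the hypothesis hands me exactly the configuration exploited in the first case of the proof of Lemma~\ref{lem1}. A triangle in $G$ consists of three mutually adjacent vertices $u,w,v$; in particular $uw$ is an edge and $v$ is a common neighbor, so $v\in N_G(u)\cap N_G(w)$. Since each of $u,w$ is adjacent both to the third triangle vertex and to the other, $uw$ is a non-pendant edge with $\deg(u),\deg(w)\ge 2$, placing us precisely in the first branch of that proof.

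First I would fix such a triangle $u,w,v$ and define the candidate function $f=(V_0,V_1,V_2,V_3)$ by $V_0=\{u,w\}$, $V_1=\emptyset$, $V_3=\{v\}$, and $V_2=V(G)\setminus\{u,w,v\}$, exactly as in Lemma~\ref{lem1}. Next I would verify that $f$ is an RDRD-function: the two zero-labelled vertices $u$ and $w$ are adjacent, so $G[V_0]$ is a single edge and hence has no isolated vertex; moreover each of $u$ and $w$ has the neighbor $v$ with $f(v)=3$, satisfying condition (a); condition (b) is vacuous since $V_1=\emptyset$; and every remaining vertex carries label $2$ or $3$, imposing no further constraint.

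Finally I would compute the weight: $w(f)=|V_1|+2|V_2|+3|V_3|=0+2(n-3)+3=2n-3$, whence $\gamma_{rdR}(G)\le 2n-3$. Note this silently uses $n\ge 3$, which is automatic because a triangle already forces $|V(G)|\ge 3$ (and indeed for $G=K_3$ the bound is attained, consistent with Observation~\ref{the-com-par}).

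I do not anticipate a genuine obstacle here: the only conceptual point is recognizing that a triangle supplies a non-pendant edge whose endpoints share a neighbor, so that the intersecting-neighborhoods case of Lemma~\ref{lem1}—which is the branch yielding the stronger bound $2n-3$ rather than $2n-2$—applies directly. The remainder is the short verification of the RDRD conditions and the arithmetic of the weight, both routine.
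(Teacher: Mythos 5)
Your proof is correct and is essentially the paper's own argument: the paper proves this proposition simply by pointing to the first case of the proof of Lemma~\ref{lem1}, which is exactly the construction you carry out (labels $0,0,3$ on the triangle and $2$ elsewhere, giving weight $2n-3$). No issues.
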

%\begin{proof}
%Let $u, v, w$ form a triangle in $G$. Define $f : V \to \{0, 1, 2, 3\}$ by $f(u) = 3,\ f(v) = f(w) = 0$ and
%$f(x) = 2$ for all $x \in V - \{u, v, w\}$. Hence
%$\gamma_{rdR}(G) \le 2n-3$.
%\end{proof}
\begin{theorem}\label{the-111}
For every graph $G$, $\gamma_{rR}(G) < \gamma_{rdR}(G$).
\end{theorem}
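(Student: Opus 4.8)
The plan is to take a $\gamma_{rdR}$-function $f=(V_0,V_1,V_2,V_3)$ of $G$ and manufacture from it a restrained Roman dominating function $g$ whose weight is strictly smaller than $w(f)=\gamma_{rdR}(G)$; since $\gamma_{rR}(G)\le w(g)$, this immediately yields the strict inequality. The construction splits into two cases according to whether $f$ actually uses the label $3$.

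First I would treat the case $V_3\ne\emptyset$. Here I define $g$ by lowering every $3$ to a $2$ and keeping all other labels, i.e. $g(v)=2$ when $f(v)\in\{2,3\}$ and $g(v)=f(v)$ otherwise. To see that $g$ is a restrained Roman dominating function, note that a vertex $v$ with $g(v)=0$ satisfies $f(v)=0$, so under $f$ it had either a neighbour in $V_3$ or two neighbours in $V_2$; in both cases it now has a neighbour carrying label $2$ under $g$, which is exactly the Roman condition. Moreover the zero set is unchanged, $V_0^g=V_0^f$, so $G[V_0^g]$ still has no isolated vertex. Thus $g$ is feasible and $w(g)=w(f)-|V_3|\le w(f)-1<\gamma_{rdR}(G)$.

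The remaining case $V_3=\emptyset$ is the delicate one, because the previous conversion does nothing. The key observation is that with no label $3$ present, every vertex of $V_0$ must have at least \emph{two} neighbours in $V_2$, so the label $2$ is being spent with one unit to spare on each zero vertex. Since $G$ is nonempty, one first checks $V_2\ne\emptyset$: otherwise only labels $0$ and $1$ occur, which forces $V_0=\emptyset$ (a zero vertex needs a $2$- or $3$-neighbour) and then $V_1=\emptyset$ (a $1$ needs a neighbour of label $\ge 2$), an impossibility. I would then pick any $x\in V_2$ and demote it, setting $g(x)=1$ and $g=f$ elsewhere. After deleting the single vertex $x$ from $V_2$, every zero vertex still retains at least one neighbour in $V_2$, so the Roman condition survives, and again $V_0^g=V_0^f$ preserves the restrained condition. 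Hence $g$ is a restrained Roman dominating function with $w(g)=w(f)-1<\gamma_{rdR}(G)$.

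Combining the two cases gives $\gamma_{rR}(G)\le \gamma_{rdR}(G)-1<\gamma_{rdR}(G)$, as desired. I expect the only real subtlety to be the case $V_3=\emptyset$: one must exploit the double coverage of the zero vertices (two $V_2$-neighbours rather than one) to guarantee that removing a single label $2$ cannot destroy the Roman property, and one must dispose of the degenerate possibility $V_2=\emptyset$. Everything else reduces to a routine verification of the two defining conditions of a restrained Roman dominating function.
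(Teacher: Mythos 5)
Your proof is correct and follows essentially the same route as the paper: convert a minimum $\gamma_{rdR}$-function into a restrained Roman dominating function of strictly smaller weight, casing on whether $V_3=\emptyset$, lowering $3$'s to $2$'s in the first case and demoting a single $2$ to a $1$ in the second. The only difference is cosmetic --- the paper further splits the $V_3=\emptyset$ case into $V_0=\emptyset$ and $V_0\ne\emptyset$, whereas your explicit ``double coverage'' observation (each zero vertex has two $V_2$-neighbours when $V_3=\emptyset$) handles both subcases at once, which is a slight streamlining.
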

\begin{proof}
Let $f=(V_0,V_1,V_2,V_3)$ be a $\gamma_{rdR}(G)$-function. If $V_3 \ne \emptyset$, then $(V'_0=V_0, V'_1=V_1 , V'_2=V_2\cup V_3)$ is an
RRD-function $g$ such that $w(g)< w(f)$. Let $V_3=\emptyset$. If $V_0=\emptyset$, then, since $V_2 \ne \emptyset$,
$g=(\emptyset, V'_1=V_1\cup V_2, \emptyset)$ is an RRD-function such that $w(g)< w(f)$. If $V_0 \ne \emptyset$, then $|V_2|\ge 2$. Let $f(v)=2$
for a vertex $v$. Then $g=(V'_0=V_0, V'_1=V_1\cup \{v\}, V'_2=V_2-\{v\})$ is an RRD-function $g$ for which $w(g)< w(f)$. Therefore
$\gamma_{rR}(G) < \gamma_{rdR}(G)$.
\end{proof}
\vspace{2mm}

As an immediate consequence of Proposition \ref{cor1}, we have.
\begin{corollary}
For any nontrivial connected graph $G$, $\gamma_{rdR}(G) < 2\gamma_{rR}(G)$.
\end{corollary}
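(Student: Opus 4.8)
The plan is to derive this directly from the two facts already established just above it, namely Proposition~\ref{cor1} (which gives $\gamma_{rdR}(G)\le 2\gamma_{rR}(G)$ with equality precisely when $G=\overline{K_n}$) and the defining hypothesis that $G$ is a nontrivial connected graph. The statement $\gamma_{rdR}(G)<2\gamma_{rR}(G)$ is simply the assertion that the equality case of Proposition~\ref{cor1} cannot occur here, so the whole corollary reduces to a single observation about when a graph can equal its own edgeless complement-copy $\overline{K_n}$.

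The key step is to observe that $\overline{K_n}$ is the empty graph on $n$ vertices, which has no edges at all and is therefore disconnected whenever $n\ge 2$. Since $G$ is assumed to be \emph{nontrivial} (so $n\ge 2$) and \emph{connected}, $G$ cannot be $\overline{K_n}$: a connected graph on at least two vertices must contain at least one edge. Hence the equality condition of Proposition~\ref{cor1} fails for $G$, and the inequality must be strict.

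Concretely, I would write: by Proposition~\ref{cor1} we have $\gamma_{rdR}(G)\le 2\gamma_{rR}(G)$, with equality if and only if $G=\overline{K_n}$. But $\overline{K_n}$ has no edges and so, for $n\ge 2$, is not connected; since $G$ is a nontrivial connected graph it has at least two vertices and at least one edge, whence $G\ne\overline{K_n}$. Therefore equality cannot hold, which forces $\gamma_{rdR}(G)<2\gamma_{rR}(G)$.

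There is essentially no obstacle here beyond correctly invoking the equality characterization; the only point meriting a word of care is confirming that ``nontrivial'' is being used to mean $n\ge 2$ (equivalently $G$ has an edge, given connectedness), so that $G$ genuinely differs from the edgeless graph. Once that is noted, the result is an immediate consequence of the preceding proposition, exactly as the phrase ``As an immediate consequence of Proposition~\ref{cor1}'' advertises.
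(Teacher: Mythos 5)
Your argument is correct and is exactly the paper's intended reasoning: the corollary is stated as an immediate consequence of Proposition~\ref{cor1}, and ruling out the equality case $G=\overline{K_n}$ via connectedness (plus $n\ge 2$) is precisely the one-line observation required. No gaps.
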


\begin{theorem}\label{the-222} Let  $G$ be a graph of order $n$. Then $\gamma_{rdR}(G)=\gamma_{rR}(G)+1$ if and only if $G$ is one of the following
graphs.\\
\emph{1}. $G$ has a vertex of degree $n-1$.\\
\emph{2}. There exists a  subset $S$ of $V(G)$ such that:\\
\emph{2.1}. every vertex of $V-S$ is adjacent to a vertex in $S$,\\
\emph{2.2}. there are two subsets $A_0$ and $A_1$ of $V-S$ with $A_0\cup A_1=V-S$ such that $A_0$ is the set of non-isolated
vertices in $N(S)$ and each vertex in $A_0$ has at least two neighbors in $S$,\\
\emph{2.3}. for any $2$-subset $\{a,b\}$ of $S$, $N(\{a,b\})\cup A_0 \ne \emptyset$ and for a $3$-subset $\{x,y,z\}$ of $S$,
if $\{x,y,z\} \cap A_0 \ne \emptyset$, then there are three vertices $u,v,w$ in $A_0$ such that $N(u)\cup S=\{x,y\}$, $N(v)\cup S=\{x,z\}$ and
$N(w)\cup S=\{y,z\}$.
\end{theorem}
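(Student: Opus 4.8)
The plan is to exploit the fact that, by Theorem \ref{the-111}, one always has $\gamma_{rR}(G)<\gamma_{rdR}(G)$, i.e. $\gamma_{rdR}(G)\ge \gamma_{rR}(G)+1$. Hence the equality $\gamma_{rdR}(G)=\gamma_{rR}(G)+1$ is equivalent to the single inequality $\gamma_{rdR}(G)\le \gamma_{rR}(G)+1$, that is, to the existence of an RDRD function of $G$ whose weight exceeds $\gamma_{rR}(G)$ by exactly one. Both directions will be phrased in terms of such a \emph{minimal upgrade} of a $\gamma_{rR}$-function into an RDRD function, and, conversely, of the canonical downgrade of an optimal RDRD function into a $\gamma_{rR}$-function already used in the proof of Theorem \ref{the-111}.

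For the direction ($\Leftarrow$) I would construct, in each family, an RDRD function of weight $\gamma_{rR}(G)+1$. If $G$ has a vertex $v$ of degree $n-1$ (family~1), I start from a $\gamma_{rR}$-function $f=(V_0,V_1,V_2)$ and raise the label of $v$ to $3$, keeping the remaining labels. Since $v$ is adjacent to every other vertex, every $0$-vertex then sees the neighbor $v$ of label $3$ and every $1$-vertex sees $v$ with label at least $2$, so the double Roman conditions hold, while $G[V_0]$ is unchanged and hence still has no isolated vertex. A short case analysis on $f(v)$ (with the subcase $V_2=\emptyset$, which forces $V_0=\emptyset$, treated by raising $v$ only to $2$, and the subcase $f(v)\le 1$ compensated by simultaneously lowering one vertex of $V_2$ as in the proof of Theorem \ref{the-111}) shows that the net increase is exactly $1$. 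For family~2 the set $S$ plays the role of the positive-weight vertices: conditions~2.1 and~2.2 guarantee that assigning label $0$ to $V-S$ together with suitable labels $2$ or $3$ on $S$ yields an RDRD function (the vertices of $A_0$ being double Roman dominated by their two neighbors in $S$, those of $A_1$ by a single neighbor of label $3$, and $G[V-S]$ having no isolated vertex), while condition~2.3 is precisely what pins the optimal such weight to $\gamma_{rR}(G)+1$.

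For the direction ($\Rightarrow$) I would start from a $\gamma_{rdR}$-function $g=(V_0,V_1,V_2,V_3)$ and apply the downgrade from the proof of Theorem \ref{the-111}. Because that construction produces an RRD function of weight $w(g)-|V_3|$ when $V_3\ne\emptyset$ (respectively $w(g)-|V_2|$ when $V_3=V_0=\emptyset$, and $w(g)-1$ otherwise), the hypothesis $\gamma_{rdR}(G)-\gamma_{rR}(G)=1$ forces $|V_3|\le 1$ and similarly constrains $V_2$ and $V_1$. I would then split according to whether the unique $3$-vertex, if present, is adjacent to all other vertices: in the affirmative case $G$ has a vertex of degree $n-1$ and we land in family~1, and otherwise I take $S$ to be the support $V_1\cup V_2\cup V_3$ of $g$ and read off conditions~2.1--2.3, each of which must hold since otherwise $g$ could be modified into an RDRD function of weight at most $\gamma_{rR}(G)$, contradicting Theorem \ref{the-111}.

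The main obstacle is the precise verification of condition~2.3 in both directions: the two-subset clause $N(\{a,b\})\cap A_0\ne\emptyset$ and the three-subset clause encode exactly the local obstructions that prevent an optimal RDRD function from being reduced by reusing two labels $2$ in place of a label $3$, or by sharing a single label $3$ among several deficient $0$-vertices. Showing that a failure of any one of these clauses produces an RDRD function of weight $\gamma_{rR}(G)$ --- and hence, via Theorem \ref{the-111}, an impossibility --- is the delicate combinatorial heart of the argument, and it is where the bookkeeping between the neighborhoods in $S$, the sets $A_0,A_1$, and the restrained condition on $G[V-S]$ must be carried out with care.
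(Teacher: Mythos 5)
Your overall strategy is the same as the paper's: use Theorem \ref{the-111} to reduce the equality to the single inequality $\gamma_{rdR}(G)\le\gamma_{rR}(G)+1$, deduce $|V_3|\le 1$ (and then $V_2=\emptyset$, $v$ universal) from the downgrading argument, and otherwise identify a set $S$ on which an optimal function is constant equal to $2$. However, there are concrete gaps. First and most importantly, you explicitly defer the verification of condition 2.3 in both directions, calling it ``the delicate combinatorial heart of the argument''; but that verification \emph{is} the content of case 2 of the theorem, so the proposal stops exactly where the proof has to begin. (The paper, for what it is worth, also essentially asserts this step rather than proving it, but a blind attempt cannot lean on that.) Second, your choice $S=V_1\cup V_2\cup V_3$ is incompatible with the structure of condition 2.2: there $A_0\cup A_1=V-S$ with $A_0$ the non-isolated vertices of $N(S)$ in $G-S$ (which receive label $0$) and $A_1$ the remaining vertices (which must receive label $1$, precisely because a label $0$ would leave them isolated in $G[V_0]$). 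The intended $S$ is the class $V_2$ of weight-$2$ vertices only, as in the paper; with your $S$ one gets $V-S=V_0$, $A_1=\emptyset$, and conditions 2.1--2.3 cannot be ``read off'' as claimed. Relatedly, in your backward direction for family 2 you assign label $0$ to all of $V-S$ and say the $A_1$-vertices are dominated ``by a single neighbor of label $3$''; this contradicts both the restrained condition (those vertices are isolated in $G[V_0]$) and the fact that in case 2 no vertex receives label $3$ at all.

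There is also a smaller gap in the backward direction for family 1: upgrading an arbitrary $\gamma_{rR}$-function $f$ by raising the universal vertex $v$ to $3$ works cleanly only when $f(v)=2$. When $f(v)=0$ the raise costs $3$, and lowering one vertex of $V_2$ to $1$ only recovers $1$, so the net increase is $2$, not $1$; lowering it to $0$ recovers $2$ but may isolate it in $G[V_0]$. You would need either to show that a $\gamma_{rR}$-function with $f(v)=2$ can always be chosen when $v$ is universal (or handle $f(v)\in\{0,1\}$ with a genuinely different compensation), or follow the paper in exhibiting explicit optimal functions for both parameters and proving their optimality. As written, the case analysis you list does not close.
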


\begin{proof} Let $\gamma_{rdR}(G)=\gamma_{rR}(G)+1$ with a $\gamma_{rdR}(G)$-function $f=(V_0,V_1,V_2,V_3)$ and a $\gamma_{rR}(G)$-function
$g=(U_0,U_1,U_2)$.
If $V_3\ne \emptyset$, then $|V_3|=1$.  Because if $|V_3|\ge 2$ then by  changing $3$ to $2$ we obtain a RRD-function $h$ with $w(h)< w(g)$, a
contradiction. Let $V_3=\{v\}$.
 In addition, we note that $|V_2|=0$. I we suppose that $|V_2|\ge 1$, then let $u\in V_2$. Then
$h=(V'_0=V_0, V'_1=V_1\cup \{u\}, V'_2=V_2\cup \{v\})$ is an RRD-function $g$ for which $w(h)< w(g)$, a contradiction.
Thus all vertices different from $v$ are adjacent to the vertex $v$ such that the non-isolated vertices in $N(v)$ are assigned with $0$ and the isolated
vertices in $N(v)$ are assigned with $1$.
In this case $U_0=V_0, U_1=V_1$ and $U_2=V_3$.\\

If $V_3= \emptyset$, then $V_2\ne \emptyset$ and $|V_2| \ge 2$.  In this case, there must exist a vertex $v\in V_2$ such that
$U_0= V_0, U_1=V_1\cup \{v\}$ and $U_2=V_2-\{v\}$. There is such a function $f$ if we guarantee a subset $S$ of $V(G)$ with each vertex of
weight $2$ for which every other vertex in $V-S$ has to adjacent to a vertex of $S$, that is the condition 2.1 holds.\\ Since we can only
change one of vertices of weight $2$ in $f$ to a vertex of weight $1$ in $g$,
there must be existed two subsets $A_0$ and $A_1$ in $V-S$  such that the conditions 2.2 and 2.3 hold.\\

Conversely, if the condition 1 holds, then $f=(V_0, V_1, \emptyset, V_3=\{v\})$ and $g=(U_0=V_0, U_1=V_1, U_2=\{v\})$ are
$\gamma_{rdR}(G)$-function and  $\gamma_{rR}(G)$-function respectively where $V_0$ is the set of non-isolated vertices in $N(v)$ and
$V_1$ is the set of isolated vertices in $N(v)$. Thus $\gamma_{rdR}(G)=\gamma_{rR}(G)+1$.\\
If the condition 2 holds, then we can have only one vertex of weight $2$ in $G$ under $f$ such that it changes to the weight $1$ in $G$ under $g$.
Thus $\gamma_{rdR}(G)=\gamma_{rR}(G)+1$.
\end{proof}

We showed that for any graph $G$, $\gamma_{rdR}(G)\le 2\gamma_{rR}(G)$ and the equality holds if and on if $G$ is a trivial graph $\overline{K_n}$.
Hence, for any nontrivial graph $G$, $\gamma_{rdR}(G)\le 2\gamma_{rR}(G)-1$. Now we characterise graph $G$ with this property
$\gamma_{rdR}(G)= 2\gamma_{rR}(G)-1$.

\begin{theorem}
If $G$ is a  nontrivial graph, then $\gamma_{rdR}(G)\le 2\gamma_{rR}(G)-1$. If $\gamma_{rdR}(G)=2\gamma_{rR}(G)-1$, then $G$ consists of a $K_2$ and $n-2$
isolated vertices or $G$ consists of a vertex $h$ and two disjoint vertex sets $H$ and $R$ such that $H=N(h)$, $G[H]$ does not have isolated vertices,
$G[R]$ is trivial, there is no edge between $h$ and $R$ and $N(h)\cap N(R)\neq N(h)$.
\end{theorem}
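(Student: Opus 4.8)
The plan is to separate the two assertions, since the inequality needs essentially nothing new. Proposition \ref{cor1} already gives $\gamma_{rdR}(G)\le 2\gamma_{rR}(G)$ with equality only for $\overline{K_n}$, so for a nontrivial $G$ the inequality is strict, and as both sides are integers this yields $\gamma_{rdR}(G)\le 2\gamma_{rR}(G)-1$. All the work is in characterizing equality, and the natural engine is Observation \ref{1}: starting from any $\gamma_{rR}$-function $f=(V_0,V_1,V_2)$ it gives $\gamma_{rdR}(G)\le 2|V_1|+3|V_2|$, while the equality hypothesis reads $\gamma_{rdR}(G)=2\gamma_{rR}(G)-1=2|V_1|+4|V_2|-1$. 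Comparing these forces $|V_2|\le 1$, and this single dichotomy organizes the entire forward direction.

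If $|V_2|=0$, the Roman condition forces $V_0=\emptyset$, so $f$ is the all-ones function, $\gamma_{rR}(G)=n$ and hence $\gamma_{rdR}(G)=2n-1$. I would then decompose $G$ into components and apply Proposition \ref{2n-1} to each: an isolated vertex contributes $2$, a component equal to $K_2$ contributes $3$, and any connected component of order at least $3$ contributes at most $2(\text{order})-2$. Summing these and imposing the total $2n-1$ leaves room for exactly one $K_2$ with all other vertices isolated, i.e. the first graph in the statement. The substantial case is $|V_2|=1$, say $V_2=\{h\}$. Here the function $g$ obtained from $f$ by $h\mapsto 3$, $V_1\mapsto 2$, $V_0\mapsto 0$ is an RDRD function of weight $2|V_1|+3=\gamma_{rdR}(G)$, hence optimal, and I would extract the structure from small relabelings of $g$ that must fail to beat this optimum. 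Since every vertex of $V_0$ sees the $3$ at $h$, lowering any single $V_1$-vertex is harmless for the $0$-vertices, so: a neighbour of $h$ lying in $V_1$ (labelled $2$ under $g$) could be lowered to $1$, because it still sees the $3$ at $h$, contradicting optimality; together with $V_0\subseteq N(h)$ this gives $V_0=N(h)=:H$, with $G[H]$ isolate-free by the restrained condition and $V_1=V\setminus N[h]=:R$. If two vertices of $R$ were adjacent, lowering one from $2$ to $1$ (it still sees its $R$-neighbour of value $2$) again beats the optimum, so $G[R]$ is edgeless. Finally, if every vertex of $H$ had a neighbour in $R$, then $h\mapsto 2$, $H\mapsto 0$, $R\mapsto 2$ would be a valid RDRD function of weight $2|R|+2<\gamma_{rdR}(G)$ — each $0$-vertex of $H$ then sees two $2$'s and $G[H]$ supplies its restrained neighbour — which is exactly what the condition $N(h)\cap N(R)\ne N(h)$ must forbid.

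For the converse I would compute both parameters on each family. The first family splits over components exactly as above, giving $\gamma_{rR}=n$ and $\gamma_{rdR}=2n-1$, hence equality. For the second family the functions exhibited in the forward direction give the upper bounds $\gamma_{rR}(G)\le |R|+2$ and $\gamma_{rdR}(G)\le 2|R|+3$, so what remains are the matching lower bounds. I expect $\gamma_{rdR}(G)\ge 2|R|+3$ to be the main obstacle, and in fact the delicate point: one must rule out \emph{every} cheaper RDRD function, including those that do not follow the template $h\mapsto 3,\ H\mapsto 0,\ R\mapsto 2$ but instead promote a high-degree vertex of $H$ to $3$ and push $h$ itself into $V_0$. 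It is precisely here that all three structural conditions ($G[H]$ isolate-free, $G[R]$ edgeless, and $N(h)\cap N(R)\ne N(h)$) must be used in full force, and I would attempt it by fixing an arbitrary RDRD function $\phi$, charging its weight to the neighbourhoods of $h$ and of the vertices of $R$, and using the vertex of $H$ with no $R$-neighbour guaranteed by the last condition to anchor the contribution of $h$. Verifying that no redistribution of labels over $\{h\}\cup H\cup R$ can dip below $2|R|+3$ is the step I would scrutinize most carefully, since the tightness of the characterization lives entirely in this bookkeeping.
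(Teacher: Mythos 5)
Your argument is correct and follows essentially the same route as the paper: Observation \ref{1} forces the chosen $\gamma_{rR}$-function to have $|V_2|\le 1$; the case $|V_2|=0$ reduces via Proposition \ref{2n-1} (applied componentwise, which you rightly make explicit) to $K_2$ plus isolated vertices; and in the case $|V_2|=1$ each structural condition is obtained by exhibiting a cheaper RDRD function whenever it fails --- the paper does this by direct weight counts where you use local relabelings of the lifted optimal function $g$, but the substance is the same. Note that the theorem (and the paper's proof) only claims the forward implication, so the converse you sketch but deliberately leave incomplete in your last paragraph is not required for the statement as given.
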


\begin{proof} Since $G$ is a nontrivial graph, Proposition \ref{cor1} implies $\gamma_{rdR}(G)\le 2\gamma_{rR}(G)-1$. Now we investigate the equality.\\
Let $\gamma_{rdR}(G)= 2\gamma_{rR}(G)-1$, where $f=(V_0,V_1,V_2,V_3)$ is a $\gamma_{rdR}(G)$-function and  $g=(U_0,U_1,U_2)$ is a
$\gamma_{rR}(G)$-function.
Then $2|U_1|+4|U_2|-1=|V_1|+2|V_2|+3|V_3|$. On the other hand,  since $2|U_1|+4|U_2|-1=|V_1|+2|V_2|+3|V_3|=\gamma_{rdR}(G) \le 2|U_1|+3|U_2|$,
it follows that $|U_2|\le 1$.

If $U_2=\emptyset$, then $|U_0|=0$ and therefore $|U_1|=n$. Using the inequality above, we obtain
$$2n-1=2|U_1|-1\le\gamma_{rdR}(G)\le 2|U_1|=2n.$$
If $\gamma_{rdR}(G)=2n$, then $G$ is trivial, a contradiction. If $\gamma_{rdR}(G)=2n-1$, then Proposition \ref{2n-1} shows that
$G$ consists of a $K_2$ and $n-2$ isolated vertices.

Let now $|U_2|= 1$ such that $U_2=\{h\}$, $H=N(h)$, $R=V(G)\setminus N[h]=\{u_1,u_1,\ldots,u_p\}$.
Clearly, $U_0\subseteq H$ and $R\subseteq U_1$.

If $H$ contains exactly $s\ge 1$ isolated vertices, then $\gamma_{rR}(G)=2+s+p$ and therefore
$\gamma_{rdR}(G)\le 3+s+2p\le 2\gamma_{rR}(G)-2$, a contradiction. Hence $H=N(h)$ does not contain isolated vertices and thus
$\gamma_{rR}(G)=p+2$.

If $G[R]$ contains an edge, then we obtain the contradiction  $\gamma_{rdR}(G)\le 3+2p-1=2p+2\le 2\gamma_{rR}(G)-2$. Thus $G[R]$ is trivial.

If there is an edge between $h$ and $R$, then we also obtain the contradiction  $\gamma_{rdR}(G)\le 3+2p-1=2p+2\le 2\gamma_{rR}(G)-2$.

If $N(h)\cap N(R)=N(h)$, then $f=(H,\emptyset,\{h\}\cup R,\emptyset)$ is an RDRD function of $G$, and hence
$\gamma_{rdR}(G)\le 2p+2\le 2\gamma_{rR}(G)-2$, a contradiction.
\end{proof}

\section{Trees}
In this section we study the restrained double Roman domination of trees.\\

\begin{theorem}\label{the-tree1}
If $T$ is a tree of order $n\geq2$, then $\gamma_{rdR}(T)\leq \lceil\frac{3n-1}{2}\rceil$. The equality holds if $T\in\{P_2,P_3,P_4,P_5, S_{1,2}, ws(1,n, n-1), ws(1,n, n-2)\}$.
\end{theorem}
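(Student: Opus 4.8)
The plan is to prove the upper bound $\gamma_{rdR}(T)\le \lceil\frac{3n-1}{2}\rceil$ by induction on the order $n$ of the tree, and then separately verify that each listed family attains the bound. For the inductive step, I would root the tree at a vertex and look at a diametrical path, focusing on a support vertex $u$ farthest from the root whose leaf-children and pendant structure can be assigned weights economically. The idea is to delete a small well-chosen subtree (typically the support vertex together with its leaves, or a $P_2$/$P_3$ hanging off the tree), apply the induction hypothesis to the smaller tree $T'$, and then extend a $\gamma_{rdR}(T')$-function to an $RDRD$ function on $T$ by assigning the removed vertices weights whose total cost does not exceed $\lceil\frac{3n-1}{2}\rceil-\lceil\frac{3n'-1}{2}\rceil$, where $n'=|V(T')|$. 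Since $\lceil\frac{3n-1}{2}\rceil$ grows by roughly $3/2$ per vertex, deleting two vertices must cost at most $3$ on average; the standard trick is to remove a pendant path $P_2$ at a cost of $3$ (assign $3$ to the support, $0$ to the leaf, or $2$ and $1$), which matches $\lceil\frac{3n-1}{2}\rceil-\lceil\frac{3(n-2)-1}{2}\rceil=3$.

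For the base cases I would directly check small trees ($n=2,3,4,5$), where the bound can be confirmed against the exact values already computed for paths in Theorem~\ref{the-path} and for stars in Observation~\ref{the-com-par}. The delicate part of the induction is the interaction between the deleted subtree and the restrained condition: when a leaf of $T$ is assigned $0$, it needs a neighbor in $V_0$ inside $G[V_0]$, so one cannot freely zero out pendant vertices. This forces the induction to track whether the attachment vertex in $T'$ already carries a $0$-neighbor, and to branch on the degree and the number of leaves at the chosen support vertex. I would organize the argument by cases according to how many leaves hang at the deepest support vertex and whether its parent must absorb extra weight to maintain both the double-Roman domination condition (a $0$-vertex needs a $3$-neighbor or two $2$-neighbors) and the no-isolated-vertex-in-$V_0$ condition.

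The main obstacle will be precisely these boundary interactions in the restrained constraint: ensuring that the extension of the $\gamma_{rdR}(T')$-function never creates an isolated vertex in the new $V_0$ and never violates domination at the gluing point. This is where a naive "delete $P_2$, pay $3$" scheme can fail, for instance when the support vertex is adjacent to a vertex that was assigned $0$ in $T'$. Handling this cleanly typically requires either choosing which $\gamma_{rdR}(T')$-function to start from (invoking Lemma~\ref{lem1} to guarantee $V_0\cup V_1\ne\emptyset$ so that there is room to maneuver) or slightly enlarging the deleted piece so that the leaf's restrained partner is also removed and reassigned. For the equality claim, since the theorem only asserts that equality \emph{holds} for the listed graphs (an "if", not "only if"), it suffices to exhibit an $RDRD$ function of the stated weight for each of $P_2,P_3,P_4,P_5$, the double star $S_{1,2}$, and the wounded spiders $ws(1,n,n-1)$ and $ws(1,n,n-2)$, and to verify $n+2$ or the appropriate direct count equals $\lceil\frac{3n-1}{2}\rceil$ in each case; this is a routine finite check once the correct orders are substituted.
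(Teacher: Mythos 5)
Your plan is essentially the paper's proof: induct on $n$, dispose of $\mathrm{diam}(T)\le 3$ (stars and double stars) directly, and for $\mathrm{diam}(T)\ge 4$ delete the substar $T_v$ at a deepest support vertex $v$ (chosen on a longest path to have only leaf children) and extend a $\gamma_{rdR}(T')$-function of $T'=T-T_v$. The one thing worth noting is that the ``main obstacle'' you single out --- the restrained condition at the gluing point --- simply does not arise under the right extension, which is one of the two you list: set $f(v)=2$ and $f(z)=1$ for each of the $k$ leaf children $z$ of $v$ (never $f(z)=0$, which would indeed strand $z$ in $G[V_0]$). This introduces no new $0$-vertices, leaves $V_0$ and all domination conditions of $f'$ untouched, and costs $k+2$ for the $k+1$ deleted vertices, giving $\gamma_{rdR}(T)\le \lceil\frac{3(n-k-1)-1}{2}\rceil+k+2=\lceil\frac{3n-k}{2}\rceil\le\lceil\frac{3n-1}{2}\rceil$ for every $k\ge 1$; so no case analysis on the attachment vertex, no appeal to Lemma~\ref{lem1}, and no enlargement of the deleted piece is needed. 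Your reading of the equality claim as a one-directional finite verification also matches the paper.
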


\begin{proof}
 Let $T$ be a tree of order $n\geq2$. We will proceed by induction on $n$. If $n=2$, then $\gamma_{rdR}(T)=3=\lceil\frac{3n-1}{2}\rceil$. If $n\geq3$,
 then $diam(T)\geq2$.
If $diam(T)=2$, then $T$ is the star $K_{1,n-1}$ for $n\geq3$ and $\gamma_{rdR}(T)=n+1\leq \lceil\frac{3n-1}{2}\rceil$. If $diam(T)=3$, then $T$ is a
double star $S_{r,s}$ for $1\leq r\leq s$. Hence, $n=r+s+2\geq4$.
If $r=1=s$, then $T=P_4$ and $\gamma_{rdR}(T)=6\leq\lceil\frac{12-1}{2}\rceil$. If $r=1, s\ge 2$, then $n=s+3$ and
$\gamma_{rdR}(T)=s+5\leq\lceil\frac{3(s+3)-1}{2}\rceil$. If $r\ge 2, s\ge 2$,
then $n=r+s+2$ and  $\gamma_{rdR}(T)=r+s+4\leq\lceil\frac{3(r+s+2)-1}{2}\rceil$.\\
Hence, we may assume that $diam(T)\geq4$. This implies that $n\geq5$. Assume that any tree $T'$ with order $2\leq n'<n$ has
$\gamma_{rdR}(T')\leq \lceil\dfrac{3n'-1}{2}\rceil$. Among all longest paths in $T$,
choose $P$ to be one that maximizes the degree of its next-to-last vertex $v$, and let $w$ be a leaf neighbor of $v$. Note that by our
choice of $v$, every child of $v$ is a leaf. Since $deg(v)\geq2$, the vertex $v$
has at least one leaf as a child. Now we put $T'=T-T_v$ where the order of the substar $T_v$ is $k+1$ with $k\geq1$. Note that since
$diam(T)\geq4$, $T'$ has at least three vertices, that is, $n'\geq3$. Let $f'$ be a
$\gamma_{rdR}$-function of $T'$. Form $f$ from $f'$ by letting $f(x)=f'(x)$ for all $x\in V(T')$, $f(v)=2$, and $f(z)=1$ for all leaf neighbors of $v$.
Thus $f$ is a restrained double Roman dominating function of $T$,
implying that $\gamma_{rdR}(T)\leq \gamma_{rdR}(T')+k+2 \le \lceil\dfrac{3(n-k-1)-1}{2}\rceil+k+2=\lceil\dfrac{3n-k}{2}\rceil
\leq \lceil\dfrac{3n-1}{2}\rceil$.\\

If $T\in \{P_2,P_3,P_4,P_5, S_{1,2}, , ws(1,n, n-1), ws(1,n, n-2)\}$, then  clearly
$\gamma_{rdR}(T)=\lceil\dfrac{3n-1}{2}\rceil$.
\end{proof}
\vspace{2mm}

\begin{theorem}\label{the-tree2}
  For every tree $T$ of  order $n\geq 3$, with $l$ leaves and $s$ support vertices, we have $\gamma_{rdR}(T)\leq\dfrac{4n+2s-l}{3}$, and this
  bound is sharp for the family of stars ($K_{1,n-1}$ $n\geq 3$),
 double stars,  caterpillars for which each vertex is a leaf  or a support vertex and all support vertices have even degree
 $2m$ or at most two end support vertices has degree $2m-1$ and the other support vertices has degree $2m$, wounded spiders in which
 the central vertex is adjacent with at least two leaves.
\end{theorem}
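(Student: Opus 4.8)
The plan is to prove the bound by induction on the order $n$, and then to settle sharpness by exhibiting one optimal function on each listed family. The base cases are $n=2$ and $\mathrm{diam}(T)\le 3$; these are precisely the stars and double stars, for which (exactly as in the proof of Theorem \ref{the-tree1}) one checks directly that $\gamma_{rdR}(T)=\frac{4n+2s-l}{3}$, and in particular $\gamma_{rdR}(T)\le\frac{4n+2s-l}{3}$.

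For the inductive step I would assume $\mathrm{diam}(T)\ge 4$ and fix a longest path $v_0v_1\cdots v_d$. Because the path is longest, every neighbour of $v_1$ other than $v_2$ is a leaf; write $A=N(v_1)\setminus\{v_2\}$ and $t=|A|\ge 1$. The reduction is to delete a small subtree containing $v_1$, take a $\gamma_{rdR}$-function $f'$ of the resulting smaller tree $T'$, and extend it by setting $f(v_1)=2$, $f(a)=1$ for every $a\in A$ (and $f(v_2)=1$ in the case where $v_2$ is deleted as well). Writing $W$ for the added weight and $\Delta n,\Delta s,\Delta l$ for the changes in order, number of support vertices, and number of leaves, the induction hypothesis gives $\gamma_{rdR}(T)\le\gamma_{rdR}(T')+W\le\frac{4n'+2s'-l'}{3}+W$, so it suffices to establish the master inequality
$$3W\le 4\,\Delta n+2\,\Delta s-\Delta l.$$
Validity of the extended $f$ is immediate: the added vertices all carry positive weight and meet their double Roman conditions locally (each value-$1$ vertex is adjacent to the value-$2$ vertex $v_1$), while $V_0(f)=V_0(f')$ forces $T[V_0(f)]=T'[V_0(f')]$, so the restrained condition is inherited.

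I would then split into two cases. If $\deg(v_2)\ge 3$, delete $\{v_1\}\cup A$; the vertex $v_2$ stays internal, whence $\Delta n=t+1$, $\Delta s=1$, $\Delta l=t$, and the master inequality $3(t+2)\le 4(t+1)+2-t$ holds with equality. If $\deg(v_2)=2$, delete $\{v_1,v_2\}\cup A$, so $W=t+3$, and the analysis branches according to $\deg(v_3)$ and to whether $v_3$ and $v_4$ are support vertices, since removing the degree-two vertex $v_2$ may turn $v_3$ into a leaf and thereby promote $v_4$ to a support vertex; in each sub-configuration one checks $3W=3t+9\le 4(t+2)+2\,\Delta s-\Delta l$ (small residual trees $T'$ being absorbed into the base cases). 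I expect this second level of bookkeeping — keeping track of exactly how $s$ and $l$ move when a deleted degree-two vertex creates a fresh leaf — to be the main obstacle, together with the point that $f(v_2)$ must be set to $1$ rather than $0$ so that no double Roman or restrained constraint is pushed back onto $T'$. It is worth noting that the sub-cases in which the master inequality is tight are exactly the extremal caterpillars, which previews the sharpness discussion.

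Finally, for sharpness I would record one attaining function per family and argue its optimality. For a star assign the centre $2$ and every leaf $1$ (weight $n+1$); for a double star assign both centres $2$ and all leaves $1$ (weight $r+s+4$); for the prescribed caterpillars assign every support vertex $2$ and every leaf $1$, giving weight $2s+l=\frac{4n+2s-l}{3}$ since $n=s+l$, where the even-degree hypotheses are precisely what is needed for the matching \emph{lower} bound (no cheaper $RDRD$ function exists); and for a wounded spider whose centre has at least two leaves, assign the centre $3$, its leaf-neighbours $1$, and each subdivided leg the optimal path pattern from Theorem \ref{the-path}. In each case one verifies the double Roman and restrained conditions and computes the weight to equal $\frac{4n+2s-l}{3}$, the delicate direction again being the lower bound forcing equality rather than strict inequality.
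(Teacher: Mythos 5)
Your proposal is correct and follows essentially the same route as the paper: induction on $n$ with stars and double stars as base cases, deletion of the star at the next-to-last vertex $v_1$ of a longest path when $\deg(v_2)\ge 3$, and deletion of $\{v_1,v_2\}\cup A$ with the sub-split on $\deg(v_3)$ (tracking the new leaf created at $v_3$ and the resulting $\Delta s\le 0$... i.e.\ $s'\le s$) when $\deg(v_2)=2$ — your ``master inequality'' $3W\le 4\Delta n+2\Delta s-\Delta l$ is exactly the computation the paper performs in each case. The bookkeeping you defer does close in every sub-configuration, and your sharpness sketch is in fact more explicit than the paper's, which merely lists the extremal families without verifying the matching lower bounds.
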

\begin{proof}
Let $T$ be a tree with order $n\geq3$. Since $n\geq3$, $diam(T)\geq2$. If $diam(T)=2$, then $T$ is the star $K_{1,n-1}$ for $n\geq3$ and
$\gamma_{rdR}(T)=n+1\leq\dfrac{4n+2-(n-1)}{3}=\dfrac{3n+3}{3}=n+1$.
If $diam(T)=3$, then $T$ is a double star $S_{r,t}$ for $1\leq r\leq t$. We have $\gamma_{rdR}(T)=n+2=\dfrac{4n+2s-l}{3}$. Hence, we may assume
$diam(T)\geq4$. Thus, $n\geq5$. Assume that any tree
$T'$ with order $3\leq n'<n$, $l'$ leaves and $s'$ support vertices has $\gamma_{rdR}(T')\leq\dfrac{4n'+2s'-l'}{2}$. Among all longest paths in $T$,
choose $P$ to be one that maximizes the degree of its next-to-last vertex $u$,
and let $x$ be a leaf neighbor of $u$, $w$ be a parent vertex of $v$ and $v$ be a parent vertex of $u$. Note that by our choice of $u$, every child
of $u$ is a leaf. Since $t=deg(u)\geq2$, the vertex $u$ has at least one leaf children.
We now consider the two cases are as follows:\\
\textbf{Case 1}. $deg(v)\geq3$. In this case, we put $T'=T-T_u$, where the order of the star $T_u$ is $t$ with $t\geq2$. Note that since $diam(T)\geq4$,
$T'$ has at least three vertices, that is, $n'\geq3$. Let $f'$ be a
$\gamma_{rdR}$-function of $T'$. Thus we have $n'=n-t$, $l'=l-(t-1)$ and $s'=s-1$. Clearly,
$\gamma_{rdR}(T)\leq \gamma_{rdR}(T')+t+1\leq\dfrac{4(n-t)+2(s-1)-(l-(t-1))}{3}+t+1=\dfrac{4n+2s-l}{3}$.\\
 \textbf{Case 2}. $deg(v)=2$. We now consider the following two subcases.\\
\textbf{i}. $deg(w)>2$. Then we put $T'=T-T_v$ where order of subtree $T_v$ is $t+1$. Clearly, we have $n'=n-(t+1)$, $s'=s-1$ and $l'=l-(t-1)$. Thus,
$\gamma_{rdR}(T)\leq \gamma_{rdR}(T')+t+2\leq \dfrac{4(n-t-1)+2(s-1)-(l-(t-1))}{3}+t+2=\dfrac{4n+2s-l-1}{3}\leq \dfrac{4n+2s-l}{3}$.\\
\textbf{ii}. $deg(w)=2$. Then we put $T'=T-T_v$, where the order of the subtree $T_v$ is $t+1$. Thus in this case, $w$ in the subtree $T'$
becomes a leaf and  we have $n'=n-(t+1)$, $s'\le s$ and $l'= l-(t-1)+1$. Thus,
$\gamma_{rdR}(T)\leq \gamma_{rdR}(T')+t+2\leq \dfrac{4(n-t-1)+2(s)-(l-(t-1)+1)}{3}+t+2=\dfrac{4n+2s-l}{3}$.
\end{proof}

\begin{theorem}\label{the-tree3}
If $T$ is a  tree, then $\gamma_r(T)+1\le \gamma_{rdR}(T)\le 3\gamma_r(T)$, and equality for the lower bound holds if and only if $T$ is a star.
The upper bound is sharp for the paths $P_{m}$  ($m\equiv 1\ \mbox{mod}\ 3$),
The cycles $C_n$  ($n\equiv 0,\,1\ \mbox{mod}\ 3$), the complete graphs $K_n$,  the complete bipartite graphs
$K_{n,m}\ (m,n \ge 2)$ and the multipartite graphs $K_{n_1,n_2,\cdots, n_m},\ (m\ge 3)$.
\end{theorem}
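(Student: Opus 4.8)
The plan is to establish the two inequalities separately, then handle the equality/sharpness claims. For the lower bound $\gamma_r(T)+1\le\gamma_{rdR}(T)$, I would first invoke Theorem~\ref{the-111}, which already gives $\gamma_{rR}(T)<\gamma_{rdR}(T)$, and combine it with the elementary observation that the restrained domination number is bounded by the restrained Roman domination number, i.e. $\gamma_r(T)\le\gamma_{rR}(T)$ (since the support of any $\gamma_{rR}$-function is a restrained dominating set, weighted at least by its cardinality). Because both parameters are integers and the inequality in Theorem~\ref{the-111} is strict, I would conclude $\gamma_{rdR}(T)\ge\gamma_{rR}(T)+1\ge\gamma_r(T)+1$. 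The equality case for the lower bound then forces both intermediate inequalities to be tight simultaneously, and I would argue that $\gamma_r(T)=\gamma_{rR}(T)$ together with $\gamma_{rdR}(T)=\gamma_{rR}(T)+1$ pins $T$ down to a star, using the structural characterization of the difference-one case from Theorem~\ref{the-222}, combined with a direct check that stars satisfy $\gamma_r(K_{1,n-1})=n-1$ and $\gamma_{rdR}(K_{1,n-1})=n+1$ from Observation~\ref{the-com-par}(iii).

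\textbf{For the upper bound} $\gamma_{rdR}(T)\le 3\gamma_r(T)$, I would take a $\gamma_r(T)$-function, i.e.\ a minimum restrained dominating set $S$, and promote every vertex of $S$ to weight $3$, leaving all vertices outside $S$ at weight $0$; the resulting function $f$ assigns total weight exactly $3|S|=3\gamma_r(T)$. I would then verify that $f$ is a valid $RDRD$ function: every vertex of weight $0$ lies in $V(T)\setminus S$ and hence (by the domination property) has a neighbor in $S$ assigned $3$, satisfying condition (a); there are no vertices of weight $1$, so condition (b) is vacuous; and the restrained condition on $S$ guarantees that every vertex in $V(T)\setminus S$ has another neighbor in $V(T)\setminus S$, so $T[V_0]=T[V(T)\setminus S]$ has no isolated vertex. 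This shows $f$ is admissible and gives the bound immediately.

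\textbf{The sharpness claims} for the upper bound require only exhibiting that equality holds for each listed family, and here I would simply compare the known values. For the paths $P_m$ with $m\equiv1\ (\mathrm{mod}\ 3)$, Theorem~\ref{the-path} gives $\gamma_{rdR}(P_m)=m+2$, and $\gamma_r(P_m)=\lceil m/3\rceil$ handles the check $3\gamma_r=m+2$ in this residue class; similarly for cycles $C_n$ with $n\equiv0,1\ (\mathrm{mod}\ 3)$ I would use Theorem~\ref{the-cycle} together with the standard formula for $\gamma_r(C_n)$. For $K_n$, $K_{n,m}$, and the multipartite graphs, I would read $\gamma_{rdR}$ off Observation~\ref{the-com-par} and match it against the corresponding restrained domination numbers (e.g.\ $\gamma_r(K_n)=1$ is impossible so one uses the correct small values, and $\gamma_{rdR}(K_n)=3=3\cdot1$), verifying equality case by case.

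\textbf{The main obstacle} will be the equality analysis for the lower bound, not the inequalities themselves, which are routine. The delicate point is to show that $\gamma_{rdR}(T)=\gamma_r(T)+1$ forces \emph{simultaneous} tightness in $\gamma_r\le\gamma_{rR}$ and $\gamma_{rR}<\gamma_{rdR}$, and then to extract from the structural conditions in Theorem~\ref{the-222} that only stars survive among \emph{trees}; the characterization there (condition 1 being a vertex of degree $n-1$, which for a tree means precisely a star, while condition 2 cannot occur in a tree whose restrained and restrained-Roman numbers coincide) is what forces the conclusion, and verifying that the branching case~2 is genuinely excluded for trees is where the real care is needed.
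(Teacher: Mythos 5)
Your two inequalities are fine, but your treatment of the equality case for the lower bound has a concrete error and an unexecuted step. First, the error: you assert $\gamma_r(K_{1,n-1})=n-1$. In fact $\gamma_r(K_{1,n-1})=n$: a leaf outside a restrained dominating set would need a neighbor outside the set, but its only neighbor is the center, so every leaf lies in the set, and then the center must too (otherwise the center has no neighbor outside the set). With your value $n-1$ you would get $\gamma_{rdR}=n+1=\gamma_r+2$, which would \emph{refute} the very claim that stars attain equality; with the correct value $n$ the verification goes through. Second, the ``only if'' direction in your plan is deferred entirely to Theorem~\ref{the-222}: you need simultaneous tightness of $\gamma_r(T)=\gamma_{rR}(T)$ and $\gamma_{rdR}(T)=\gamma_{rR}(T)+1$, and you assert without proof that condition~2 of Theorem~\ref{the-222} ``cannot occur in a tree whose restrained and restrained-Roman numbers coincide.'' That exclusion is exactly the content you would have to supply, and Theorem~\ref{the-222} is an awkward tool for it. A direct argument is much shorter and is essentially what the paper does: if $f$ is a $\gamma_{rdR}$-function, the positive vertices form a restrained dominating set $S$, and since some vertex must receive weight at least $2$, $w(f)\ge |S|+1\ge\gamma_r(T)+1$; equality forces $|S|=\gamma_r(T)$, exactly one vertex of weight $2$, no vertex of weight $3$, whence $V_0=\emptyset$ (a weight-$0$ vertex would need two weight-$2$ neighbors or a weight-$3$ neighbor), and every weight-$1$ vertex must be adjacent to the unique weight-$2$ vertex, so $T$ is a star.

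On the parts that do work: your upper bound (promote a minimum restrained dominating set to weight $3$) is exactly the paper's argument, and your lower-bound \emph{inequality} via $\gamma_r(T)\le\gamma_{rR}(T)<\gamma_{rdR}(T)$ (Theorem~\ref{the-111}) is a valid alternative to the paper's direct count, at the cost of making the equality analysis two-layered as above. For sharpness, your path and cycle checks are fine in the stated residue classes, but note that $\gamma_r(K_n)=1$ is perfectly possible for $n\ge 3$ (any single vertex works, since the remaining vertices induce $K_{n-1}$ with no isolated vertex), and that is precisely why $\gamma_{rdR}(K_n)=3=3\gamma_r(K_n)$ from Observation~\ref{the-com-par}; your parenthetical claiming it is ``impossible'' should be deleted.
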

\begin{proof}
Let $T$ be a tree. Since at least one vertex has value $2$ under  any $RDRD$ function of $T$, we see that $\gamma_r(T)+1 \le \gamma_{rdR}(T)$.
If we assign
the value $3$ to the vertices in a $\gamma_r(T)$-set, then  we obtain an RDRD function of $T$. Therefore $\gamma_{rdR}(T)\le 3\gamma_r(T)$.\\
The sharpness of the upper bound  is deuced from Propositions 1-7 of \cite{domke} and Observation \ref{the-com-par}, Theorem
\ref{the-path} and Theorem \ref{the-cycle}.\\
For equality of the lower bound, if $T=K_{1,n-1}$ is a star, then it is clear $\gamma_{rdR}(T)=n+1$ and $\gamma_{r}(T)=n$. If $T$ is a tree and
$\gamma_{rdR}(T)=\gamma_{r}(T)+1$, then we have only one vertex of value $2$ in any $\gamma_{rdR}(T)$-function and the other vertices of positive
weight have value $1$.
In addition, the vertices of value 1 are adjacent to the vertex of value 2, and therefore $T$ is a star.
%We claim that any vertex of $T$ has no value $0$ under any $\gamma_{rdR}(T)$-function. If so, then at least two vertices have value $0$ and at
least one vertex
%of $T$ has value $3$ or two vertices have value $2$. Since by changing any positive value in  $\gamma_{rdR}(T)$-function to $1$, we obtain a
%RD-function of $T$. Then  $\gamma_{rdR}(T)\ge \gamma_{r}(T)+2$, a contradiction. Therefore $T$ is a star.
\end{proof}

The following result gives us the RDRD of $G$  in terms of the size of $E(G)$, and order of $G$.
\begin{proposition}\label{prop-tree4}   Let $G$ be a connected graph  $G$  of order $n\ge 2$ with $m$ edges. Then
$\gamma_{rdR}(G) \le 4m-2n+3$, with equality if and only if $G$ is a tree with
$\gamma_{rdR}(G) = 2n-1$.
\end{proposition}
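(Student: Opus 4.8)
The plan is to obtain both the inequality and its equality characterization directly from two facts that are already available: the absolute bound $\gamma_{rdR}(G)\le 2n-1$ furnished by Proposition \ref{2n-1}, and the elementary observation that a connected graph on $n$ vertices satisfies $m\ge n-1$, with equality precisely when $G$ is a tree. No new construction of an RDRD function will be required.

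First I would establish the inequality by a short chain. Since $G$ is connected we have $m\ge n-1$, hence $4m-2n+3\ge 4(n-1)-2n+3=2n-1$. Combining this with Proposition \ref{2n-1} yields
$$\gamma_{rdR}(G)\le 2n-1\le 4m-2n+3,$$
which is exactly the claimed bound. Thus the inequality is an immediate consequence of the previously proved results.

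Next I would analyse the equality case. For $\gamma_{rdR}(G)=4m-2n+3$ to hold, both inequalities in the displayed chain must be equalities at once. Equality in the right-hand inequality $2n-1\le 4m-2n+3$ forces $m=n-1$, i.e.\ $G$ is a tree, while equality in the left-hand inequality forces $\gamma_{rdR}(G)=2n-1$. Conversely, if $G$ is a tree with $\gamma_{rdR}(G)=2n-1$, then $m=n-1$ and $4m-2n+3=4(n-1)-2n+3=2n-1=\gamma_{rdR}(G)$, so equality indeed holds. This gives precisely the stated characterization.

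I do not expect a genuine obstacle, since the whole argument is a two-line combination of known bounds; the only care needed is to check the arithmetic $4(n-1)-2n+3=2n-1$ and to keep the two equality conditions aligned with the two links of the chain. I would also add, as a closing remark rather than as part of the core argument, that by the equality clause of Proposition \ref{2n-1} the regime $\gamma_{rdR}(G)=2n-1$ forces $n=2$, so the extremal graphs in fact reduce to $G=K_2$.
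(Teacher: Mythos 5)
Your proof is correct and follows essentially the same route as the paper: combine the bound $\gamma_{rdR}(G)\le 2n-1$ with $m\ge n-1$ to get $\gamma_{rdR}(G)\le 2n-1\le 4m-2n+3$, and read off the equality case from the two links of the chain. Your closing remark that Proposition \ref{2n-1} forces $n=2$ in the extremal case is a worthwhile addition the paper omits, since it shows the characterization collapses to $G=K_2$.
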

\begin{proof}
For the given connected graph, $m\ge n-1$ and  according to Proposition \ref{diam} $\gamma_{rdR}(G) \le 2n-1 =4n-4 -2n +3\le 4m-2n+3$.\\
If $\gamma_{rdR}(G) = 4m-2n+3$, then
$m=n-1$ and $G$ is a tree with $\gamma_{rdR}(G) = 2n-1$.\\
Conversely, assume that $G$  is a tree with $\gamma_{rdR}(G) = 2n-1$. Hence $\gamma_{rdR}(G) = 4m-2n+3$.
\end{proof}

\section{Conclusions and problems}
The concept of restrained double Roman domination in graphs was initially investigated in this paper. We studied the computational complexity of this
concept and proved some bounds on the $RDRD$ number of graphs. In the case of trees, we characterized all trees attaining the exhibited bound.  We now
conclude the paper  with some problems suggested by this research.\vspace{1mm}\\
$\bullet$ For any graph $G$, provided the characterizations of graphs with small or large $RDRD$ numbers.
$\bullet$ It is also worthwhile proving some other nontrivial sharp bounds on $\gamma_{rdR}(G)$ for general graphs $G$ or some well-known families
such as, chordal, planar, triangle-free, or claw-free graphs.\vspace{1mm}\\
$\bullet$ The decision problem RESTRAINED DOUBLE ROMAN DOMINATION is NP-complete  for general graphs, as proved in Theorem \ref{the-NP}. By the way,
there might be some families of graphs such that $RDRD$ is  NP-complete for them or there might be some  polynomial-time algorithms for computing
the $RDRD$ number of some well-known families of graphs, for instance, trees. Can you provide these families?\\
$\bullet$ In Theorems \ref{the-tree1} and \ref{the-tree2} we showed upper bounds for $\gamma_{rdR}(T)$. The sufficient and necessity conditions for equality may be problems.
\vspace{3mm}

\end{document}